\documentclass[ECP,preprint]{ejpecp} 

\usepackage[T1]{fontenc}
\usepackage[numbers,sort&compress]{natbib}
\usepackage{xcolor}
\usepackage{tikz}

\allowdisplaybreaks

\SHORTTITLE{Can the root cluster remain largest forever?}
\TITLE{Can the root cluster remain largest forever in random recursive tree percolation?\support{The author is supported by China Postdoctoral Science Foundation (No. 2023M743721 and No. 2025T180850).}}
\AUTHORS{Yushu~Zheng\thanks{Academy of Mathematics and Systems Science, Chinese Academy of Sciences. \EMAIL{yszheng666@gmail.com}}}
\KEYWORDS{random recursive tree; percolation; Chinese restaurant process; Yule process}
\AMSSUBJ{60C05; 05C80; 60K35}
\SUBMITTED{}
\ACCEPTED{}
\VOLUME{0}
\YEAR{2023}
\PAPERNUM{0}
\DOI{10.1214/YY-TN}
\ABSTRACT{We consider Bernoulli bond percolation with fixed retention parameter $p$ on the random recursive tree, coupled through the natural growth process.  We prove that the root cluster has a strictly positive probability of remaining a largest cluster at every time.  Equivalently, in the associated Simon-type Chinese restaurant process, the first table has a positive probability of remaining a largest table forever.  We further show that two associated quantities, the limit as $n\to\infty$ of the probability that the root cluster is a largest cluster at time $n$ and the probability that it remains a largest cluster for all times, are strictly increasing and continuous functions of $p$ on $(0,1]$, and both tend to zero as $p\downarrow0$.}

\newcommand{\abs}[1]{\lvert #1 \rvert}
\newcommand{\dd}{\,\mathrm d}
\newcommand{\ee}{\mathrm e}
\newcommand{\eps}{\varepsilon}

\newcommand{\E}{\mathbb E}
\newcommand{\Pp}{\mathbb P}

\newcommand{\1}{\mathbf 1}

\begin{document}

\section{Introduction}
A random recursive tree (RRT) is a basic model of random growth: starting from the root $1$, vertex $n+1$ attaches to a uniformly chosen vertex of $[n]:=\{1,\ldots,n\}$.  We study Bernoulli bond percolation with fixed retention parameter $p\in[0,1]$ on the RRT under its natural growth coupling.  When a new vertex is born, it either joins the open cluster of its parent, or starts a new cluster.  Equivalently, the labelled cluster-size process is a Simon-type Chinese restaurant process: each new customer either sits at the table of a uniformly chosen previous customer, or opens a new table.

Percolation and fragmentation on recursive trees have been studied extensively, mostly from the point of view of cluster sizes.  In the regime $p=p(n)\to1$, Bertoin \cite{bertoin2014sizes,bertoin2014fluctuations} studied the giant root cluster and its fluctuations, while Baur \cite{baur2016percolation} described the limiting cluster genealogy; see also Baur--Bertoin \cite{baur2015fragmentation}.  For fixed $p$, the cluster sizes have power-law behaviour.  Kalay and Ben-Naim \cite{kalay2015fragmentation} predicted this in a related fragmentation model, and Gu and Yuan \cite{gu2024size} recently proved fixed-parameter results for site percolation, including Yule--Simon tails and order $n^p$ growth of the largest clusters.

Here we ask a different, dynamical question: can the root cluster remain largest forever in random recursive tree percolation?  This is not implied by cluster-size asymptotics.  There are infinitely many challenger clusters, and for fixed $p<1$ the root cluster lives on the same $n^p$ scale as the extreme clusters.  Our main result answers the question affirmatively: despite these challengers, the root cluster has a strictly positive probability of remaining a largest cluster forever.

\smallskip
\noindent\textbf{Setup and main results.}
Fix $p\in[0,1]$.  We construct the RRTs $(\mathcal T_n:n\ge1)$ on a common probability space as follows.  Given $\mathcal T_n$, vertex $n+1$ attaches to a uniformly chosen parent in $[n]$.  Independently, the new edge is declared open with probability $p$ and closed with probability $1-p$.  This gives the canonical coupling of bond percolation on the growing RRT.

Clusters are labelled dynamically.  The root cluster has label $1$.  Whenever a new vertex is connected to its parent by an open edge, it inherits the cluster label of its parent; whenever the edge is closed, the new vertex starts the next unused cluster label.  Let
\[
        \Pi^{(n)}=(\Pi^{(n)}_j:j\ge1)
\]
denote the resulting cluster vertex sets at time $n$, with $\Pi^{(n)}_j=\varnothing$ for labels not yet born.  We rank clusters by the total order
\[
        i\prec_n j
        \quad\Longleftrightarrow\quad
        \abs{\Pi^{(n)}_i}>\abs{\Pi^{(n)}_j}
        \quad\text{or}\quad
        \bigl(\abs{\Pi^{(n)}_i}=\abs{\Pi^{(n)}_j}\text{ and }i<j\bigr),
\]
and write $\Pi^{(n),\downarrow}=(\Pi^{(n),\downarrow}_j:j\ge1)$ for the ranked sequence.
Thus $\Pi^{(n),\downarrow}_1$ is the largest cluster at time $n$, with ties resolved by the smaller birth label.

\begin{theorem}\label{thm:persistence}
For Bernoulli bond percolation with fixed $p\in[0,1]$ on the canonically coupled RRT, define the one-time leadership probabilities and their limiting value by
\[
        q_n(p):=\Pp\bigl(1\in\Pi^{(n),\downarrow}_1\bigr),
        \quad n\ge1,\qquad
        \theta(p):=\lim_{n\to\infty}q_n(p),
\]
whose existence is justified in Remark~\ref{rem:limit-pn}.  Also define
\[
        \rho(p):=\Pp\bigl(1\in\Pi^{(n),\downarrow}_1\text{ for all }n\ge1\bigr).
\]
Then, for every $p\in[0,1]$,
\[
        0<\rho(p)\le \theta(p).
\]
\end{theorem}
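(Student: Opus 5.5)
The cases $p\in\{0,1\}$ are immediate. For $p=1$ there is only one cluster. For $p=0$ every cluster is a singleton, and the tie-breaking rule by smaller label makes cluster $1$ the leader at every time. So in both cases $\rho(p)=1$. The inequality $\rho(p)\le\theta(p)$ is also immediate: the event defining $\rho(p)$ is contained in $\{1\in\Pi^{(n),\downarrow}_1\}$ for every $n$, so $\rho(p)\le q_n(p)$ for all $n$, and letting $n\to\infty$ (using Remark~\ref{rem:limit-pn}) gives $\rho(p)\le\theta(p)$. The real content is $\rho(p)>0$ for fixed $p\in(0,1)$. The plan is a ``good start plus union bound over challengers'' argument.

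\emph{Step 1: good start.} Fix a large integer $K$ and let $G_K$ be the event that the edges of vertices $2,\dots,K$ are all open. Then every parent lies in the root cluster, so at time $K$ the root cluster is all of $[K]$ and it has led at every time $n\le K$. Moreover $\Pp(G_K)=p^{K-1}>0$. By the Markov property it suffices to show that, started from the configuration ``root cluster of size $K$, no other cluster'', the probability that some later-born cluster ever strictly exceeds the root cluster (equivalently, ties go to label $1$) is at most $1/2$ for $K$ large.

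\emph{Step 2: one challenger against the root.} Consider a challenger cluster born at a time when the root cluster has size $r$. Observe the pair (root size, challenger size) only at the steps where one of the two grows. Each vertex of either cluster is chosen as parent uniformly and the edge is then open with probability $p$, so these two coordinates evolve as a standard P\'olya urn started from $(r,1)$, independently of everything else in the process. Let $\varepsilon_r$ be the probability that this urn ever reaches a state in which the challenger coordinate exceeds the root coordinate. I would bound $\varepsilon_r$ by a gambler's-ruin / martingale argument. The challenger fraction $M_k$ is a bounded martingale started at $1/(r+1)$, with a Beta$(1,r)$ limit. To catch up, the challenger must first reach a fraction close to $1/2$, and optional stopping at that hitting time gives a bound of order $1/r$. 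This is too weak to sum, so I would sharpen it. Either use the exponential martingales of the urn, or compute exactly: a P\'olya urn path is exchangeable, with law given by i.i.d.\ Bernoulli$(U)$ draws where $U\sim$ Beta$(1,r)$. The catch-up probability is then at most $\Pp(U\ge 1/4)$ plus the probability that a random walk with drift $\le -1/2$ ever climbs $r$ levels. Both terms are $O(e^{-cr})$, so $\varepsilon_r\le Ce^{-cr}$.

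\emph{Step 3: counting challengers.} Let $B_r$ be the number of clusters born while the root cluster has size exactly $r\ge K$. The union bound then gives
\[
\Pp(\text{root ever overtaken}\mid G_K)\le \sum_{r\ge K}\E[B_r\mid G_K]\,\varepsilon_r .
\]
To control $\E[B_r]$, at total size $n$ the root grows with probability $pr/n$ and a new cluster is born with probability $1-p$. Hence $\E[B_r]\lesssim \E[n_r]/r$, where $n_r$ is the total size when the root first reaches size $r$. Embedding into continuous time (a Yule process, with the root cluster a rate-$p$ Yule process) gives $n\approx W e^{t}$ and $r\approx W_0e^{pt}$. So $n_r\approx W W_0^{-1/p}r^{1/p}$, with $W,W_0$ Gamma variables of shape $K$ under $G_K$, and this ratio has finite mean. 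This yields the polynomial bound $\E[B_r\mid G_K]\le C_K r^{1/p-1}$. Even if $C_K$ grows polynomially in $K$, the sum $\sum_{r\ge K}C_K r^{1/p-1}e^{-cr}\to0$ as $K\to\infty$. This proves $\rho(p)\ge p^{K-1}/2>0$.

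The main obstacle is Step 2: getting an exponentially (or at least summably, against $r^{1/p-1}$) small catch-up probability $\varepsilon_r$ that is uniform over the challenger's birth time. A secondary point is making the moment bound for $n_r$ in Step 3 rigorous from a fixed initial configuration; for this I would use explicit Gamma limits of Yule processes rather than almost-sure asymptotics.
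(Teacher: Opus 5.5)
Your proof is correct, and it reaches $\rho(p)>0$ by a different route from the paper's. Your Step~2 is essentially the paper's Lemma~\ref{lem:dominate} and Proposition~\ref{prop:prob}: Beta de Finetti plus gambler's ruin gives $\Pp(D_j^c\mid\mathcal F_{T_j})\le\eps_{L_j}$ with $\eps_r\le(3/4)^r+(1/3)^r$. The difference is in how the infinitely many challengers are combined.

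The paper makes no good start. Early challengers may face a root of size $1$, where the failure probability is $\log 2$, so a union bound is hopeless. The paper therefore multiplies success probabilities using positive association (Burton--Dabrowski for exchangeable sequences, transported through the aggregation property in Proposition~\ref{prop:posicor}). It needs only the almost sure eventual growth $L_{2^k}\ge2^{(1-\eps)pk}$ from Proposition~\ref{prop:io}.

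You instead pay $p^{K-1}$ so that every challenger faces a root of size at least $K$. After that, a plain union bound organized by root size rather than by label suffices. This avoids correlation inequalities altogether and uses only the marginal conditional law of each two-table comparison. The price is a first-moment bound on the number of challengers, which the paper never needs.

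Two points deserve care when you write this up.

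First, for $r\ge K$ one has $\E[B_r\mid G_K]=\infty$ whenever $pr\le1$. Given $\mathcal F_{n_r}$, the probability that the root still has size $r$ at total size $m$ is $\prod_{n=n_r}^{m-1}(1-pr/n)$. This is at least $(n_r-1)/(m-1)$ when $pr\le1$, and at most $(n_r/m)^{pr}$ in general. The upper bound yields $\E[B_r\mid\mathcal F_{n_r}]\le(1-p)\bigl(1+n_r/(pr-1)\bigr)$, but only for $pr>1$. So $K>1/p$ is needed not merely to make $\eps_r$ small but to make the expected counts finite. This is also why $\E[W_0^{-1/p}]<\infty$ requires $K>1/p$.

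Second, the heuristic $n_r\approx WW_0^{-1/p}r^{1/p}$ is not itself an inequality, but a direct bound is easy. Conditionally on the root Yule path $R$ started from $K$ individuals, the non-root population is a rate-one Yule process with immigration at rate $(1-p)R(s)$. With $\sigma_r$ the hitting time of $r$ by $R$, this gives $\E[n_r\mid R]\le r+(1-p)r\,\ee^{\sigma_r}$. Moreover $\E[\ee^{\sigma_r}]=\prod_{i=K}^{r-1}\frac{pi}{pi-1}\le \ee^{2/p}r^{2/p}$ uniformly in $K\ge2/p$. This polynomial bound is all your Step~3 requires.
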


\begin{theorem}\label{thm:parameter}
The functions $\rho$ and $\theta$ are strictly increasing and continuous on $(0,1]$.  Moreover,
\[
        \lim_{p\downarrow0}\rho(p)=0,
        \qquad
        \lim_{p\downarrow0}\theta(p)=0.
\]
\end{theorem}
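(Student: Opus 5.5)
The plan is to pass to continuous time and express both $\theta(p)$ and $\rho(p)$ as functionals of an explicit branching structure that depends smoothly on $p$. Embed the RRT in a rate-one Yule process: every vertex gives birth at rate $1$, and each newborn joins its parent's cluster with probability $p$ and otherwise founds a new cluster. By thinning, the root cluster is then a Yule process of rate $p$. A cluster founded at time $\tau_i$ is an independent rate-$p$ Yule process started at $\tau_i$. New clusters are founded at rate $(1-p)N(t)$, where $N$ is the total population. Hence $\abs{\Pi_1}\sim W_1\ee^{pt}$ and $\abs{\Pi_i}\sim W_i\ee^{p(t-\tau_i)}$ with $W_i$ i.i.d.\ $\mathrm{Exp}(1)$. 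I would then use the representation from Remark~\ref{rem:limit-pn},
\[
\theta(p)=\Pp\bigl(W_1>\sup_{i\ge2}W_i\ee^{-p\tau_i}\bigr),
\]
where ties have probability zero. Changing time to $s=pt$ turns every cluster into a standard rate-one Yule process. In the new time the challengers form a Cox process with intensity $\tfrac{1-p}{p}N(s/p)\,\dd s$, and $\rho(p)$ is the probability that the root is never overtaken along the whole trajectory.

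For strict monotonicity, I would compare the challenger point processes for $p<p'$ in the $s$-time scale. In this scale the intensity $\tfrac{1-p}{p}N(s/p)$ is asymptotically $\tfrac{1-p}{p}W\ee^{s/p}$, so the challengers are born later in $s$-time and hence with smaller weights $W_i\ee^{-s_i}$. The goal is a coupling under which the $p'$ challenger family is dominated by the $p$ family, with the root's $W_1$ left unchanged. Strictness would then come from exhibiting an event of positive probability on which the root leads for $p'$ but not for $p$: for instance, a challenger born early enough for $p$ that beats $W_1$, while the corresponding challenger for $p'$ is born later and does not. The main obstacle is the correlation coming from the fact that $N$ contains the root cluster itself. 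I would try first to handle it by conditioning on the root cluster's path and dominating the non-root part of $N$ separately. If a clean domination fails, a fallback is to write $\theta$ and $\rho$ as expectations of explicit monotone functionals and differentiate in $p$.

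For continuity I would use the uniform coupling $U_e$ of the edges, with $e$ open iff $U_e\le p$. For fixed $n$, $q_n$ is a polynomial in $p$, and the finite-horizon analogue of $\rho$ is also continuous. The limits are then uniform on compact subsets of $(0,1]$ thanks to a tail bound of the form
\[
\Pp\bigl(\exists\, i \text{ with } \tau_i>T \text{ that ever overtakes the root}\bigr)\le\eps(T),
\]
uniformly for $p\in[p_0,1]$. This bound follows from summing, over late-born clusters, the probability that a Yule process started at time $\tau_i$ ever catches up with one started at time $0$. Since $\rho(p)$ is an intersection over all times, it is approximated from above by the finite-horizon events up to this tail term.

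Finally, for $p\downarrow0$ it suffices to prove $\theta(p)\to0$, because $\rho\le\theta$ by Theorem~\ref{thm:persistence}. In the $s$-scale, the number of challengers born before $s=\delta$ is of order $\tfrac1p\int_0^{\delta}\ee^{s/p}\dd s\to\infty$. These challengers carry i.i.d.\ $\mathrm{Exp}(1)$ weights discounted by at most $\ee^{-\delta}$, so their supremum tends to infinity in probability. On the other hand, $W_1$ is a fixed $\mathrm{Exp}(1)$ variable. Hence $\Pp\bigl(W_1>\sup_{i\ge2}W_i\ee^{-s_i}\bigr)\to0$.
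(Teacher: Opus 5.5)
Your proposal leaves strict monotonicity, the central step, open. You describe the coupling you want ($p'$ challengers dominated by $p$ challengers, root unchanged) but do not construct it. Your worry about $N$ containing the root, and the fallback of differentiating in $p$, show that the key idea is missing.

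That idea is already implicit in your time change. In $s=pt$ time each customer produces same-table births at rate $1$ and new-table births at rate $\lambda(p)=(1-p)/p$. Only the latter depends on $p$, and it is decreasing. So for $p<p'$, start from the $p'$-process and superpose independent extra new-table clocks of rate $\lambda(p)-\lambda(p')$ per customer. Let each extra table and all its descendants evolve under the $\lambda(p)$ dynamics. The result is the $p$-process, and the $p'$-process is the subsystem of retained tables. Every retained table, in particular the root, has the same trajectory in both systems. Hence persistence, and ``root has the largest limit weight'', for $p$ are contained in the same events for $p'$, with no correlation issue. This is an inclusion of challenger sets, not a later-born ``corresponding'' challenger as in your sketch.

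Strictness then needs a joint event of positive probability. Given the $p'$-process, the extra tables spawned by the root during $s\in[0,1]$ form a Poisson process with intensity $(\lambda(p)-\lambda(p'))R(s)\dd s$ and independent limit weights $\ee^{-s}E$, $E\sim\mathrm{Exp}(1)$. So the conditional probability $q$ that one of them beats $W_1$ is a.s. positive, and $\rho(p')-\rho(p)\ge\E[q\1_{A'}]$ with $A'$ persistence for $p'$. This is positive only because $\Pp(A')=\rho(p')>0$ by Theorem~\ref{thm:persistence}, an input you never invoke. The same argument with $\theta(p')>0$ handles $\theta$.

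Your continuity argument also has a hole. Your tail bound controls clusters \emph{born} late. But the gap between the time-$n$ quantities ($q_n$, or a horizon-$n$ version of $\rho$) and their limits also includes the event that one of the finitely many early clusters crosses the root \emph{after} time $n$. That is a late crossing, not a late birth, and $\eps(T)$ says nothing about it. So ``approximated from above by the finite-horizon events up to this tail term'' is false as stated.

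You need either an extra uniform-in-$p$ bound on late crossings among a fixed finite set of tables, or, as the paper does, truncation in tables rather than time. The paper uses $\rho_m=\Pp(\bigcap_{j=2}^mD_j)$ and $\theta_m=\Pp(X_1>\max_{2\le j\le m}X_j)$ over all times; their errors are at most $\Pp(\exists j>m:D_j^c)$, which your tail bound does control. These approximants are not polynomials. Their continuity comes from summing over seating histories up to $T_m$, each of polynomial probability, with $T_m$ uniformly tight on compacts of $(0,1)$. It also uses Theorem~\ref{thm:aggregation}, which makes the post-$T_m$ relative dynamics of tables $1,\dots,m$ a fixed $m$-table urn whose law does not depend on $p$; $p=1$ is treated separately. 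The paper also uses the coupling above to make the lower bound on $L_{2^k}$, and hence the tail bound, uniform over $p\in[a,1]$.

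Your $p\downarrow0$ argument is a valid alternative to the paper's exchangeability bound $\theta(p)\le1-(1-p)^{M-1}+(1-p)^{M-1}/M$, after one fix. The number of challengers born before $s=\delta$ is not independent of their weights. Restrict to challengers spawned directly by the root: given the root path they are Poisson with mean at least $\frac{1-p}{p}\delta$ and carry independent weights. This gives $\theta(p)\le\E\bigl[\exp\bigl(-\frac{1-p}{p}\,\delta\,\ee^{-\ee^{\delta}W_1}\bigr)\bigr]\to0$.
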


\begin{remark}\label{rem:endpoints}
At the endpoints, the model is deterministic: for $p=0$ all clusters are singletons and the root cluster wins every tie, while for $p=1$ all vertices belong to the root cluster.  Hence
\[
        \rho(0)=\theta(0)=\rho(1)=\theta(1)=1.
\]
Together with Theorem~\ref{thm:parameter}, this shows that both functions are discontinuous at $p=0$ and continuous at $p=1$.
\end{remark}

Figure~\ref{fig:parameter-profile} summarizes the qualitative behaviour of the two functions.

\begin{center}
\begin{minipage}{0.92\textwidth}
\centering
\refstepcounter{figure}
\begin{tikzpicture}[x=4.106cm,y=2.404cm]
    \draw[->, line width=0.6pt] (0,0) -- (1.21,0) node[below] {$p$};
    \draw[->, line width=0.6pt] (0,0) -- (0,1.23) node[above] {$\rho,\theta$};
    \draw (0,0) node[below left] {$0$};
    \draw (1,0) -- +(0,0.015) node[below=3pt] {$1$};
    \draw (0,1) -- +(0.015,0) node[left=3pt] {$1$};
    \draw[line width=1.0pt, blue]
        plot[smooth] coordinates {(0,0) (0.20,0.015) (0.40,0.065) (0.60,0.17) (0.78,0.34) (0.92,0.62) (1.00,1.00)};
    \draw[line width=1.0pt, red!75!black]
        plot[smooth] coordinates {(0,0) (0.20,0.035) (0.40,0.13) (0.60,0.30) (0.78,0.55) (0.92,0.83) (1.00,1.00)};
    \node[blue,below right=1pt] at (0.78,0.39) {$\rho(p)$};
    \node[red!75!black,above left=1pt] at (0.84,0.70) {$\theta(p)$};
    \fill (0,1) circle (1.8pt);
    \fill (1,1) circle (1.8pt) node[anchor=south west, xshift=0.5pt, yshift=0.25pt] {$(1,1)$};
    \draw[fill=white,line width=0.7pt] (0,0) circle (1.8pt);
\end{tikzpicture}

\smallskip
\parbox{\textwidth}{\small\textbf{Figure~\thefigure.} A schematic sketch of the functions $\rho(p)$ and $\theta(p)$.}
\label{fig:parameter-profile}
\end{minipage}
\end{center}

\begin{remark}\label{rem:limit-pn}
We justify why the limit defining $\theta(p)$ exists.  At $p=0$ and $p=1$ this is immediate from Remark~\ref{rem:endpoints}, so fix $0<p<1$.  By Baur--Bertoin \cite[Theorem~2(i) and Section~3.2]{baur2015fragmentation}, with $p=\ee^{-t}$, one has, for every $q>1/p$,
\[
        n^{-p}\bigl(\abs{\Pi_j^{(n)}}:j\ge1\bigr)
        \longrightarrow X=(X_j:j\ge1)
        \qquad\text{a.s. in }\ell^q.
\]
Since $X_1>0$ a.s. and $X\in\ell^q\subset c_0$, the maximum of $X$ is attained; the Mittag--Leffler/Beta coordinate representation of Baur--Bertoin gives $\Pp(X_i=X_j)=0$ for $i\ne j$, hence no tie for the maximum.  The argmax indicator is therefore continuous at $X$, and dominated convergence yields
\[
        q_n(p)\longrightarrow
        \Pp\bigl(X_1>\sup_{j\ge2}X_j\bigr),
        \qquad\text{so}\qquad
        \theta(p)=\Pp\bigl(X_1>\sup_{j\ge2}X_j\bigr).
\]

This identity alone does not show that $\theta(p)>0$, since it only identifies the limiting competition among countably many dependent coordinates.
\end{remark}

\smallskip
\noindent\textbf{Proof outline.}
The event that the root cluster remains largest forever means that it beats every other cluster.  As intuition suggests, these cluster-by-cluster events are positively correlated; see Proposition~\ref{prop:posicor} for a precise statement.  For a single challenger cluster, the aggregation property (Theorem~\ref{thm:aggregation}) shows that its chance of being beaten depends only on the size of the root cluster when the challenger is born.  Finally, Proposition~\ref{prop:io}, proved from the Yule embedding, ensures that the root cluster is typically large enough when later challenger clusters are born, so the resulting failure probabilities are summable.  We carry this out in the equivalent restaurant representation of Proposition~\ref{prop:cluster-crp}, which isolates the labelled cluster-size dynamics and makes the comparisons easier to see.

\smallskip
\noindent\textbf{Organization.}
Section~\ref{sec:prelim} collects the restaurant representation and records some basic properties.  Sections~\ref{sec:persistence-proof} and~\ref{sec:regularity} prove Theorems~\ref{thm:persistence} and~\ref{thm:parameter}, respectively.

\section{Preliminaries}\label{sec:prelim}
This section records the basic tools used throughout the proofs.  We begin with the Simon-type restaurant representation of the labelled cluster process, and then record two structural properties of this representation.

In the restaurant formulation, customers arrive one at a time, and the $i$-th customer sits at table $C_i$.  The seating rule is as follows.  Set $C_1=1$, $K_1=1$, and $N_1^{(1)}=1$.  For $n\ge1$, let
\[
        N_j^{(n)}:=\#\{1\le i\le n:C_i=j\},
        \qquad j\ge1,
\]
the number of customers at table $j$ after $n$ arrivals, so that $K_n=\#\{j:N_j^{(n)}>0\}$ is the number of occupied tables after $n$ customers.  Given
$\mathcal F_n:=\sigma(C_1,\ldots,C_n)$, customer $n+1$ is seated according to
\begin{equation}\label{eq:crp-transition}
\Pp(C_{n+1}=j\mid\mathcal F_n)=
\begin{cases}
        p\,N_j^{(n)}/n, & 1\le j\le K_n,\\[2mm]
        1-p, & j=K_n+1,\\[1mm]
        0, & j>K_n+1.
\end{cases}
\end{equation}
Equivalently, with probability $p$ the new customer chooses one of the existing customers uniformly and sits at the same table; with probability $1-p$ the new customer opens the next table.  We call this the Simon-type Chinese restaurant process with parameter $p$.\footnote{The qualifier ``Simon-type'' distinguishes this process from the classical Ewens--Pitman restaurant: here the probability of opening a new table is the constant $1-p$, while in the Ewens--Pitman process it depends on the current population and, in the two-parameter case, on the number of occupied tables.}

\begin{proposition}\label{prop:cluster-crp}
Let $J_i$ be the birth label of the percolation cluster containing vertex $i$ in the canonically coupled RRT construction above.  Then $(J_i:i\ge1)$ has the same law as the table assignment sequence $(C_i:i\ge1)$.  Consequently, the labelled cluster-size process has the same law as the table-size process:
\[
        \bigl((\abs{\Pi_j^{(n)}}:j\ge1):n\ge1\bigr)
        \overset{d}=
        \bigl((N_j^{(n)}:j\ge1):n\ge1\bigr).
\]
\end{proposition}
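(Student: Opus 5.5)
The plan is to compute the one-step conditional law of the label sequence $(J_i)$ directly and check that it coincides with the transition kernel \eqref{eq:crp-transition}. Both sequences start from $J_1=C_1=1$. Two sequences of random variables whose one-step conditional laws, given their own past, are the same function of that past have the same law; this follows by induction on finite-dimensional distributions. So it suffices to show that, for every $n\ge1$,
\[
\Pp\bigl(J_{n+1}=j\mid J_1,\ldots,J_n\bigr)=
\begin{cases}
p\,\#\{i\le n:J_i=j\}/n, & 1\le j\le K_n',\\
1-p, & j=K_n'+1,
\end{cases}
\]
where $K_n':=\max_{i\le n}J_i$ is the number of labels used so far.

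To obtain this, condition first on the larger $\sigma$-field $\mathcal G_n$ generated by $\mathcal T_n$ and the open/closed states of its edges. By construction, the parent $U_{n+1}$ of vertex $n+1$ is uniform on $[n]$, the edge state $\xi_{n+1}$ is Bernoulli$(p)$, and $U_{n+1}$ and $\xi_{n+1}$ are independent of each other and of $\mathcal G_n$. The labelling rule gives $J_{n+1}=J_{U_{n+1}}$ when $\xi_{n+1}=1$. When $\xi_{n+1}=0$, vertex $n+1$ takes the next unused label, which is $K_n'+1$, since labels are assigned in order of birth. We need that $J_1,\dots,J_n$ and $K_n'$ are $\mathcal G_n$-measurable, which is immediate from the construction. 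It then follows that
\[
\Pp(J_{n+1}=j\mid\mathcal G_n)=p\cdot\frac{\#\{i\le n:J_i=j\}}{n}\,\1_{\{j\le K_n'\}}+(1-p)\,\1_{\{j=K_n'+1\}}.
\]
This expression is a function of $(J_1,\ldots,J_n)$ alone. By the tower property, the same formula holds conditionally on $\sigma(J_1,\ldots,J_n)\subset\mathcal G_n$, and this is exactly \eqref{eq:crp-transition} with $N_j^{(n)}$ and $K_n$ replaced by the corresponding functionals of $(J_i)_{i\le n}$. The only point needing care is this passage from $\mathcal G_n$ to the coarser label filtration, and it works because the conditional probability depends only on the labels. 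I do not expect a serious obstacle here.

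The consequence for cluster sizes then follows at once. We have $\abs{\Pi_j^{(n)}}=\#\{i\le n:J_i=j\}$, and $N_j^{(n)}$ is the same measurable functional applied to $(C_i)$. The whole process $\bigl((\abs{\Pi_j^{(n)}})_{j}\bigr)_{n}$ is therefore the image of $(J_i)_{i\ge1}$ under a fixed measurable map, and the image of $(C_i)_{i\ge1}$ under the same map is $\bigl((N^{(n)}_j)_j\bigr)_n$. Equality in law of $(J_i)$ and $(C_i)$ thus transfers to the two size processes.
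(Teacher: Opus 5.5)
Your proposal is correct and follows the same route as the paper: condition on the percolation history up to time $n$, use that the parent is uniform on $[n]$ and the edge is independently open with probability $p$ to recover the kernel \eqref{eq:crp-transition}, and conclude by induction on $n$. Your tower-property passage to the label filtration and the measurable-map transfer to the size processes only make explicit steps the paper leaves implicit.
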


\begin{proof}
Conditionally on the RRT percolation history up to time $n$, the parent of vertex $n+1$ is uniform on $[n]$.  Thus the new vertex receives cluster label $j$ with probability $p\abs{\Pi_j^{(n)}}/n$, and receives the next unused label with probability $1-p$.  These are exactly the transitions in \eqref{eq:crp-transition}, so induction on $n$ gives the equality in law of the label processes.
\end{proof}

The second preliminary ingredient is the aggregation, or lumping, property of the restaurant formulation.  It is the sequential version of the classical aggregation property of Dirichlet and Dirichlet--multinomial distributions, together with the corresponding neutrality property for the internal proportions; see Mosimann~\cite{mosimann1962} and Connor--Mosimann~\cite{connor1969}.

Informally, if we group several old tables and regard each group as a single larger table, then the grouped process has the same restaurant dynamics.  Given the grouped process, the internal choices in distinct groups are independent, and each group evolves by size-biased sampling.

\begin{definition}[Aggregated seating sequence]\label{def:aggregated-sequence}
Let $\mathcal B=\{B_r:r\ge1\}$ be a partition of $\mathbb N$ whose blocks are ordered by their least elements:
\[
        \min B_1<\min B_2<\cdots.
\]
Given a seating sequence $C=(C_i:i\ge1)$, define the aggregated seating sequence $C^{\mathcal B}$ by
\[
        C_i^{\mathcal B}=r\quad\text{if and only if}\quad C_i\in B_r.
\]
This is the external, block-level dynamics: it records which block is visited, but not which original table inside that block is chosen.
\end{definition}

\begin{definition}[Within-block seating sequence]\label{def:within-block-sequence}
For a block $B\subset\mathbb N$ and a time $T$, let $m_1<m_2<\cdots$ be the customer labels $m>T$ whose chosen table lies in $B$, and set $C_B^{(T)}=(C_{m_s}:s\ge1)$.
This is the internal dynamics inside $B$ after time $T$: it keeps only the choices among the original tables in $B$.
\end{definition}

We also use the following fixed-table restaurant.  On a finite set of tables $B$, start from deterministic occupancies $n_\ell\ge1$, $\ell\in B$.  Each subsequent customer chooses a table $\ell\in B$ with probability equal to its current occupancy divided by the total occupancy of $B$; no new tables are opened inside $B$.

\begin{theorem}\label{thm:aggregation}
Let $T$ be an a.s. finite stopping time for $(\mathcal F_n)$, and condition on $\mathcal F_T$.  Let $\mathcal B=\{B_r:r\ge1\}$ be a partition such that, for every $k>K_T$, the block containing $k$ is the singleton $\{k\}$ (since the occupied tables at time $T$ have labels $[K_T]$, this amounts to grouping the tables present at time $T$, while leaving all later labels as singletons).  Put
\[
        M_r^{(T)}:=\sum_{\ell\in B_r}N_\ell^{(T)},
        \qquad r\ge1.
\]
Then the following hold.
\begin{enumerate}
\item The aggregated seating sequence after time $T$ is again a Simon-type Chinese restaurant process with parameter $p$, started from the block occupancies $(M_r^{(T)}:r\ge1)$.  Equivalently, for $s\ge T$, an already occupied block $B_r$ is chosen with probability $pM_r^{(s)}/s$, while the next new singleton block is opened with probability $1-p$.
\item Conditionally on the aggregated seating sequence $(C_k^{\mathcal B}:k>T)$, the within-block sequences $C_{B_r}^{(T)}$ over blocks $B_r\subseteq [K_T]$ are independent.  For each such block, $C_{B_r}^{(T)}$ is the fixed-table restaurant on $B_r$ started from the occupancies $(N_\ell^{(T)}:\ell\in B_r)$.
\end{enumerate}
\end{theorem}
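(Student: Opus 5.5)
The plan is to compute the conditional law of the whole post-$T$ seating sequence $(C_k:k>T)$ given $\mathcal F_T$ directly. We then show that it factorises into the block-level Simon-type law and independent fixed-table (Pólya) laws inside each old block. By the strong Markov property of the chain $(C_n)$ (the transitions \eqref{eq:crp-transition} depend on $\mathcal F_n$ only through the current occupancies $(N_j^{(n)})$ and $n$), it suffices to treat a deterministic time $T$ and a deterministic configuration at time $T$. Fix such a configuration with occupancies $N_\ell^{(T)}$, $\ell\le K_T$, and a finite horizon $n>T$. We work with a fixed finite string $c_{T+1},\ldots,c_n$ of table choices.

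First, write the probability of the string as a product of one-step transitions. At step $s\to s+1$ with $T\le s<n$, a new table has factor $1-p$, and an existing table $j$ has factor $pN_j^{(s)}/s$. For a chosen table $j\in B_r$ with $B_r\subseteq[K_T]$, rewrite this factor as
\[
        \frac{pM_r^{(s)}}{s}\cdot\frac{N_j^{(s)}}{M_r^{(s)}}.
\]
Here $M_r^{(s)}=\sum_{\ell\in B_r}N_\ell^{(s)}$. For tables born after $T$ the block is a singleton, so the second factor equals $1$. Multiplying over $s$ gives
\[
        \Pp(C_{T+1}=c_{T+1},\ldots,C_n=c_n\mid\mathcal F_T)
        =\Bigl(\prod_{s=T}^{n-1}\pi_s\Bigr)\cdot\prod_{r:\,B_r\subseteq[K_T]}\ \prod_{s:\,c_{s+1}\in B_r}\frac{N_{c_{s+1}}^{(s)}}{M_r^{(s)}}.
\]
Here $\pi_s$ is $1-p$ or $pM_r^{(s)}/s$ according to the block visited at step $s+1$.

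The key observation is that the first product depends only on the aggregated string $(c^{\mathcal B}_{T+1},\ldots,c^{\mathcal B}_n)$. This holds because $M_r^{(s)}=M_r^{(T)}+\#\{T<k\le s:c_k^{\mathcal B}=r\}$ and the new-block indicators are determined by the aggregated string. It is exactly the Simon-type transition probability started from $(M_r^{(T)})$, with new singleton blocks labelled in order. For each old block $B_r$, the inner product depends only on the within-block subsequence. Along the successive visits to $B_r$, the quantities $N_\ell^{(s)}$ and $M_r^{(s)}$ equal the initial occupancies plus the counts of previous within-block choices. Hence this product is precisely the probability that the fixed-table restaurant on $B_r$ started from $(N_\ell^{(T)})$ produces that subsequence, truncated at its length.

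Summing over all strings with a given aggregated string $a$ gives the marginal of $a$ as the Simon product. Summing over each block's internal choices uses the fact that each fixed-table product sums to $1$ over sequences of a given length. This proves part (1). Dividing the full product by this marginal shows that, given $a$, the internal strings are independent with the stated fixed-table laws up to the visit counts determined by $a$. These finite-dimensional statements are consistent in $n$, and the number of visits to each old block tends to infinity a.s. This follows because the block-level process is Simon-type, and each occupied block is visited infinitely often since $\sum_s pM/s=\infty$ by a conditional Borel–Cantelli argument. Kolmogorov extension then gives part (2) for the infinite sequences.

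The main obstacle is bookkeeping rather than estimation. One must state carefully the conditioning on the infinite aggregated sequence, which should be done via finite horizons and then a monotone-class passage. One must also handle the reduction from a stopping time $T$ to a fixed time by summing over $\{T=t\}\in\mathcal F_t$.
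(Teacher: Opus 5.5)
Your proposal is correct and follows the same route as the paper, whose proof is precisely the one-step factorization $pN_j^{(s)}/s=(pM_r^{(s)}/s)\cdot(N_j^{(s)}/M_r^{(s)})$; your path-product formula, summation over internal strings, and finite-horizon-to-infinite extension supply the bookkeeping the paper leaves implicit, notably why conditioning on the \emph{entire future} aggregated sequence does not bias the within-block choices. One small caveat: your ``infinitely many visits'' step needs $p>0$, but at $p=0$ no old block is ever revisited, so the within-block sequences are empty and part~(2) holds trivially.
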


\begin{proof}
Given the history up to time $s\ge T$, an already occupied aggregated block $B_r$ is selected with probability
\[
        p\,\frac{\sum_{\ell\in B_r}N_\ell^{(s)}}{s}
        =p\,\frac{M_r^{(s)}}{s},
\]
and a new singleton block is opened with probability $1-p$.  These are exactly the transition probabilities of the aggregated restaurant process.

Now condition also on the aggregated seating sequence.  When the aggregated sequence visits a block $B_r\subseteq[K_T]$ at time $s$, the original table is $\ell\in B_r$ with probability $N_\ell^{(s)}/M_r^{(s)}$.  This transition depends only on the current occupancies inside $B_r$.  Thus the conditional law factorizes over the blocks $B_r\subseteq[K_T]$, and each factor is the law of the corresponding fixed-table restaurant.
\end{proof}

We also need a small correlation fact for the fixed-table restaurants arising after aggregation.  We state it for two tables.  Initially, table $1$ has occupancy $A_0=a\ge1$ and table $2$ has occupancy $B_0=b\ge1$.  At visit $r$, the customer sits at table $1$ with probability $A_{r-1}/(A_{r-1}+B_{r-1})$ and at table $2$ otherwise; the chosen table occupancy is then increased by one.  Let $Y_r$ be the table chosen at visit $r$, and set
\[
        A_r=a+\sum_{s=1}^r\1_{\{Y_s=1\}},
        \qquad
        B_r=b+\sum_{s=1}^r\1_{\{Y_s=2\}}.
\]

\begin{fact}\label{fact:beta-de-finetti}
The sequence $(\1_{\{Y_r=1\}}:r\ge1)$ is exchangeable.  More precisely, there is a random variable $\Theta\sim\mathrm{Beta}(a,b)$ such that, conditionally on $\Theta$, the indicators $\1_{\{Y_r=1\}}$ are i.i.d. Bernoulli$(\Theta)$.
\end{fact}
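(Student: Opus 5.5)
This is the classical Pólya urn, and I would prove it by computing the joint law of any finite initial segment and recognising it as a Beta mixture; de Finetti's theorem then supplies $\Theta$.

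\emph{Step 1: the joint law of a finite segment.} Fix $r\ge1$ and a word $y=(y_1,\ldots,y_r)\in\{1,2\}^r$ containing $k$ ones. By the sequential seating rule,
\[
        \Pp(Y_1=y_1,\ldots,Y_r=y_r)=\prod_{s=1}^r\frac{(\text{occupancy of table }y_s\text{ before visit }s)}{a+b+s-1}.
\]
The denominators do not depend on $y$ and multiply to $(a+b)(a+b+1)\cdots(a+b+r-1)$. The numerators contributed by the visits to table $1$ are $a,a+1,\ldots,a+k-1$, taken in the order those visits occur, and those contributed by the visits to table $2$ are $b,\ldots,b+r-k-1$. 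Hence the probability equals
\[
        \frac{a^{\overline{k}}\,b^{\overline{r-k}}}{(a+b)^{\overline{r}}},
\]
where $x^{\overline{m}}$ denotes the rising factorial. This depends on $y$ only through $k$, so the indicator sequence is exchangeable.

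\emph{Step 2: identifying the mixture.} The standard Beta integral gives
\[
        \int_0^1\theta^k(1-\theta)^{r-k}\,\frac{\theta^{a-1}(1-\theta)^{b-1}}{\mathrm B(a,b)}\dd\theta
        =\frac{\mathrm B(a+k,b+r-k)}{\mathrm B(a,b)}
        =\frac{a^{\overline{k}}\,b^{\overline{r-k}}}{(a+b)^{\overline{r}}}.
\]
So every finite-dimensional law of $(\1_{\{Y_r=1\}})$ agrees with that of an i.i.d.\ Bernoulli$(\Theta)$ sequence mixed over $\Theta\sim\mathrm{Beta}(a,b)$.

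\emph{Step 3: constructing $\Theta$ on the same space.} To realise $\Theta$ on the given probability space rather than only in law, I would set $\Theta:=\lim_{r\to\infty}A_r/(a+b+r)$. This limit exists almost surely because $A_r/(a+b+r)$ is a bounded martingale. By de Finetti's theorem, for an exchangeable $\{0,1\}$-sequence this limiting frequency is exactly the directing random variable, so conditionally on it the indicators are i.i.d.\ Bernoulli$(\Theta)$. Its law is then forced to be $\mathrm{Beta}(a,b)$, since moments determine a law on $[0,1]$ and Step 2 identifies those moments.

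I expect no real obstacle here. The only point needing care is Step 3, where one must note that the mixing measure is unique and coincides with the law of the almost-sure limiting frequency.
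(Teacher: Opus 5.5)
Your proposal is correct. The rising-factorial formula gives exchangeability, the Beta integral identifies the finite-dimensional laws as a $\mathrm{Beta}(a,b)$ mixture of i.i.d.\ Bernoulli sequences, and the bounded martingale $A_r/(a+b+r)$ realises the de Finetti directing variable $\Theta$ on the same space, with its law fixed by its moments. The paper gives no proof of its own and simply invokes this classical two-colour P\'olya urn representation from Pitman; your argument is the standard proof behind that citation.
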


This is the classical two-colour P\'olya urn representation; see, for example, Pitman~\cite[Section~2.2.2]{pitman2006csp}.
The following estimate is a special case of Burton--Dabrowski~\cite[Theorem~2.1]{burton1992positive}, which shows that infinite exchangeable binary sequences are strong FKG.  For the reader's convenience, we include a short proof.

\begin{proposition}\label{thm:correlation}
For the fixed two-table restaurant above, let
\[
        \xi:=(\1_{\{Y_1=1\}},\1_{\{Y_2=1\}},\ldots)\in\{0,1\}^{\mathbb N}.
\]
Then for any bounded increasing functions $f,g$ on $\{0,1\}^{\mathbb N}$,
\begin{equation}\label{eq:positive-correlation}
        \E\bigl[f(\xi)g(\xi)\bigr]
        \ge
        \E f(\xi)\,\E g(\xi).
\end{equation}
\end{proposition}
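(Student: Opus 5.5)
The plan is to use Fact~\ref{fact:beta-de-finetti}: it turns the exchangeable sequence into a mixture of i.i.d.\ sequences, and then Harris--FKG applied inside each mixture component reduces \eqref{eq:positive-correlation} to a one-dimensional Chebyshev inequality in $\Theta$. First reduce to functions depending on finitely many coordinates. For a bounded increasing $f$ on $\{0,1\}^{\mathbb N}$, set $f_n:=\E[f(\xi)\mid \xi_1,\ldots,\xi_n]$. Then $f_n\to f(\xi)$ a.s.\ and in $L^2$ by martingale convergence, and similarly $g_n\to g(\xi)$. So it suffices to prove the inequality for $f_n,g_n$, provided these are increasing functions of $(\xi_1,\dots,\xi_n)$. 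This monotonicity is not automatic, because conditioning mixes in the law of the tail. We therefore use the mixture structure: conditionally on $\Theta$ the coordinates are i.i.d., so
\[
f_n(x_1,\ldots,x_n)=\int \tilde f(x,\vartheta)\,\mu(\dd\vartheta\mid x),
\qquad
\tilde f(x,\vartheta):=\E_\vartheta\bigl[f(x,\xi_{n+1},\ldots)\bigr].
\]
Here $\tilde f$ is increasing in $x$ (pointwise monotonicity of $f$) and in $\vartheta$ (Bernoulli$(\vartheta)$ product measures are stochastically increasing in $\vartheta$). The posterior $\mu(\cdot\mid x)$ is $\mathrm{Beta}(a+\sum x_i,\,b+n-\sum x_i)$, which is stochastically increasing in $\sum x_i$. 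Hence $f_n$ is increasing.

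For a finite-dimensional increasing $f,g$ we condition on $\Theta$. Given $\Theta=\vartheta$, $(\xi_1,\ldots,\xi_n)$ is an i.i.d.\ Bernoulli$(\vartheta)$ product measure, so Harris's inequality gives
\[
\E[f g\mid\Theta]\ge F(\Theta)G(\Theta),\qquad F(\vartheta):=\E_\vartheta f,\quad G(\vartheta):=\E_\vartheta g.
\]
By monotone coupling of product measures, $F$ and $G$ are both increasing functions of $\vartheta$. Chebyshev's association inequality for a single real random variable then gives $\E[F(\Theta)G(\Theta)]\ge \E F(\Theta)\,\E G(\Theta)=\E f\,\E g$. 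Taking expectations in the conditional Harris bound completes the finite-dimensional case. Passing to the limit $n\to\infty$ yields \eqref{eq:positive-correlation} for general bounded increasing $f,g$.

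The main obstacle is the approximation step, specifically checking that $f_n$ stays increasing. To avoid it, one can skip martingale approximation and work directly on $\{0,1\}^{\mathbb N}$ given $\Theta$: Harris's inequality holds for infinite product measures and bounded increasing measurable functions, by the same martingale argument applied to independent coordinates, where monotonicity of the conditional expectations is immediate. Then $F(\vartheta)=\E_\vartheta f$ is increasing by the monotone coupling $\1_{\{U_i\le\vartheta\}}$ with i.i.d.\ uniforms $U_i$. I would present this second route, since it only uses Harris's inequality for product measures and Chebyshev's inequality.
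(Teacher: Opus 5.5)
Your proposal is correct, and the route you say you would present (condition on $\Theta$, apply Harris's inequality to the product Bernoulli$(\Theta)$ law on $\{0,1\}^{\mathbb N}$, use the uniform coupling to see that $\vartheta\mapsto\E_\vartheta f$ is increasing, then finish with Chebyshev's association inequality) is exactly the paper's argument. Your martingale justification of the infinite-dimensional Harris inequality, where $\E_\vartheta[f\mid\xi_1,\ldots,\xi_n]$ is visibly increasing, is a more explicit version of the paper's ``monotone-class approximation''. Your first route via Beta-posterior monotonicity is also valid but not needed.
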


\begin{proof}
Condition on the directing variable $\Theta$ from Fact~\ref{fact:beta-de-finetti}.  Given $\Theta=\theta$, the law is a product Bernoulli$(\theta)$ measure, which is positively associated; this follows first for cylinder functions from the finite Harris--FKG inequality and then for bounded increasing functions by monotone-class approximation.  Hence
\[
        \E[f(\xi)g(\xi)\mid\Theta]
        \ge
        \E[f(\xi)\mid\Theta]_{\vphantom{|}}\,\E[g(\xi)\mid\Theta].
\]
For an increasing $f$, the function $\theta\mapsto\E_\theta f$ is increasing by the standard coupling $\1_{\{W_r\le\theta\}}$ with i.i.d. uniform random variables $(W_r)$.  Therefore the two conditional expectations are increasing functions of the same real variable $\Theta$, and Chebyshev's association inequality for monotone functions on the line gives \eqref{eq:positive-correlation}.
\end{proof}

\section{Proof of Theorem 1.1}\label{sec:persistence-proof}
By Proposition~\ref{prop:cluster-crp}, it remains to prove the following statement for the Simon-type restaurant formulation.

\begin{theorem}\label{thm:crp-main}
For the Simon-type Chinese restaurant process \eqref{eq:crp-transition},
\[
        \Pp\bigl(N_1^{(n)}\ge N_j^{(n)}
        \text{ for all }j\ge1\text{ and all }n\ge1\bigr)>0.
\]
Namely, the first table has positive probability to remain a largest table forever.
\end{theorem}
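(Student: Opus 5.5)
Following the outline in the introduction, I will write the target event as an intersection over challengers and show that the failure probabilities are summable after a good start. For $j\ge2$ let $E_j:=\{N_1^{(n)}\ge N_j^{(n)}\ \text{for all } n\}$. Let $\tau_j$ be the time table $j$ opens. At that time, table $1$ has occupancy $N_1^{(\tau_j)}$ and table $j$ has occupancy $1$.

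\emph{Step 1 (single challenger).} Apply Theorem~\ref{thm:aggregation} at $T=\tau_j$ with the blocks $\{1,j\}$ and singletons. Given the aggregated sequence, the internal choices between tables $1$ and $j$ form a two-table P\'olya urn started from $(a,1)$ with $a=N_1^{(\tau_j)}$. The block $\{1,j\}$ is visited infinitely often a.s., so $E_j$ occurs exactly when this urn path never has table $j$ strictly ahead. Hence $\Pp(E_j^c\mid\mathcal F_{\tau_j})=h(N_1^{(\tau_j)})$, where $h(a)$ is the probability that a P\'olya urn from $(a,1)$ ever has the second colour strictly ahead. By Fact~\ref{fact:beta-de-finetti}, the limiting fraction is $\Theta\sim\mathrm{Beta}(a,1)$. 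A gambler's-ruin bound conditional on $\Theta$ (a biased walk starting $a-1$ below the boundary) gives
\[
h(a)\le \Pp(\Theta\le 1/2)+\E\bigl[((1-\Theta)/\Theta)^{a}\,\1_{\{\Theta>1/2\}}\bigr]\le C\,2^{-a}+(\text{similar})\le C' c^{a}
\]
for some $c<1$, i.e.\ $h$ decays exponentially in $a$.

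\emph{Step 2 (the root is large when challengers are born).} This is Proposition~\ref{prop:io}. Via the Yule embedding, $N_1^{(n)}\asymp n^{p}$ and $\tau_j\approx j/(1-p)$, so $\sum_j h(N_1^{(\tau_j)})<\infty$ a.s. I would use the quantitative form: for any $L$, on the event $G_L$ that $N_1^{(\tau_j)}\ge L+\log^2 j$ for all $j$, the sum is at most $\sum_j C'c^{L+\log^2 j}$, which tends to $0$ as $L\to\infty$. The event $G_L$ has positive probability. To see this, first force the first $m$ customers all to sit at table $1$; this has positive probability and gives $N_1^{(m)}=m$. Then control the later growth via the Yule martingale, so that $\Pp(G_L)>0$ for $m$ large.

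\emph{Step 3 (combining via positive correlation).} This is the main obstacle. The events $E_j$ must be combined on $G_L$ without paying a union bound that depends on $L$. A crude approach suffices. Condition on an initial segment in which all first $m$ customers sit at table $1$. Then
\[
\Pp\Bigl(\bigcap_j E_j\Bigr)\ge \Pp(\text{start})\Bigl(1-\Pp(G_L^c\mid\text{start})-\E\Bigl[\sum_j h(N_1^{(\tau_j)})\1_{G_L}\Bigm|\text{start}\Bigr]\Bigr).
\]
Choose $m$ so that $\Pp(G_L^c\mid\text{start})<1/3$ and the summed term is $<1/3$; both follow from Step 2 because $N_1$ is monotone in its starting value. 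This union bound is already enough, so Proposition~\ref{prop:posicor} (built from Proposition~\ref{thm:correlation}) is needed only if the tail estimate in Step 2 fails to be uniform. In that case, I would use FKG to get $\Pp(\bigcap E_j)\ge\prod\Pp(E_j)$ and then a summability argument. Finally, ties at the same time are harmless, since table $1$ wins ties by the ordering.
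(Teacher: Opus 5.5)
Your main line is correct, but you combine the challengers differently from the paper.

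\textbf{How the paper does it.} The paper works with the unconditioned process. Its event $G_N=\bigcap_{k\ge N}\{L_{2^k}\ge 2^{\alpha k}\}$ is only known, via Borel--Cantelli, to have \emph{some} positive probability. Challengers born early may face a root of size $1$, so a union bound is unavailable. The paper therefore proves the conditional correlation inequality of Proposition~\ref{prop:posicor} (Beta--de Finetti, Harris--FKG, Chebyshev) and iterates it to get $\Pp(G_N)\prod_{\ell\ge2}(1-\ee^{-c_1\ell^\alpha})>0$.

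\textbf{How you do it.} You pay $\Pp(S_m)=p^{m-1}$ for the start $S_m:=\{C_2=\cdots=C_m=1\}$, after which every challenger meets a root of size at least $m$. Write $L_j:=N_1^{(\tau_j)}$ and use $h(a)\le 2^{1-a}$, which your Step~1 computation gives. Then $\Pp(G_L^c\mid S_m)$ and $\sum_{j\ge2}\Pp(E_j^c\cap\{L_j\ge L+\log^2 j\}\mid S_m)\le 2^{1-L}\sum_{j\ge2}2^{-\log^2 j}$ can both be made smaller than $1/3$. The union bound then gives $\Pp(\bigcap_jE_j)\ge p^{m-1}/3$ for $0<p<1$; the endpoints are trivial. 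This removes Propositions~\ref{prop:posicor} and~\ref{thm:correlation} entirely, at the cost of a monotone coupling in the initial condition that the paper never needs.

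\textbf{Points to tighten.}
\begin{enumerate}
\item In your display the indicator must be $\1_{\{L_j\ge L+\log^2 j\}}$, which is $\mathcal F_{\tau_j}$-measurable (as is $S_m$ for $j\ge2$). $G_L$ depends on the process after $\tau_j$, so $\Pp(E_j^c\cap G_L)\le\E[h(L_j)\1_{G_L}]$ is not justified as written. The corrected version gives the same bound.
\item Fix $L$ first, then choose $m=m(L)$.
\item Turn ``monotone in its starting value'' into an explicit coupling. After time $m$, use the same Bernoulli$(1-p)$ opening coins in the started and unconditioned processes, and a common uniform variable to decide whether a non-opening customer joins table~$1$. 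The started process then has fewer tables and a larger first table at all times. So its $\tau_j$ is later and, since $n\mapsto N_1^{(n)}$ is nondecreasing, its $L_j$ is larger. Combined with $L_j\ge m$ on $S_m$, this gives $\Pp(G_L^c\mid S_m)\le\Pp(\exists j>\ee^{\sqrt{m-L}}:L_j<L+\log^2 j)$, with the right side computed for the unconditioned process. This tends to $0$ because $L_j\ge(j/2)^{p/2}$ eventually a.s., by Proposition~\ref{prop:io} and monotonicity of $j\mapsto L_j$.
\end{enumerate}

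\textbf{Drop the fallback.} Even granting a joint FKG inequality for all the $E_j$ (not what Proposition~\ref{prop:posicor} states), $\prod_j\Pp(E_j)=0$. Indeed $h(1)\ge1/2$, and $\Pp(L_j=1)$ is at least of order $j^{-p}$, so $\sum_j\Pp(E_j^c)=\infty$. This non-summability is why the paper keeps the events $\{L_{j_k}\ge a_k\}$ inside its correlation inequality, and why your good start is essential rather than optional.
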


We shall use the following notation.  For $j\ge1$, set
\[
        D_j:=\bigl\{N_1^{(n)}\ge N_j^{(n)}\text{ for all }n\ge1\bigr\},
        \qquad
        T_j:=\inf\{n\ge1:K_n=j\},
        \qquad
        L_j:=N_1^{(T_j)}.
\]
Thus $T_j$ is the arrival time of the first customer at table $j$, and $L_j$ is the size of the first table when table $j$ is opened.

The proof of Theorem~\ref{thm:crp-main} uses the following three estimates.

\begin{proposition}\label{prop:prob}
There exists a constant $c>0$ such that, for every $j\ge2$,
\begin{equation}\label{eq:estimate}
        \Pp(D_j\mid\mathcal F_{T_j})
        \ge 1-\exp(-cL_j)
        \qquad\text{a.s.}
\end{equation}
\end{proposition}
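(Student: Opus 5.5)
Apply Theorem~\ref{thm:aggregation} at the stopping time $T=T_j$ with blocks $\{1\}$, $\{j\}$ and singletons elsewhere. Actually it is simpler to isolate the pair $\{1,j\}$. From time $T_j$ on, the tables $1$ and $j$ compete only through the customers who sit at one of them. First take the aggregated process with block $B=\{1,j\}$; all other tables present at time $T_j$ may be lumped arbitrarily. Part~2 of Theorem~\ref{thm:aggregation} then says that, conditionally on $\mathcal F_{T_j}$ and on the aggregated sequence, the within-block sequence $C_B^{(T_j)}$ is the fixed two-table restaurant started from occupancies $(a,b)=(L_j,1)$. Table $j$ is born at $T_j$ with exactly one customer. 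Moreover, $N_1^{(n)}\ge N_j^{(n)}$ for all $n\ge T_j$ holds if and only if $A_r\ge B_r$ for every $r$ up to the number of visits to $B$. Before $T_j$, table $j$ is empty, so those times are trivial. It therefore suffices to bound, for the P\'olya urn of Fact~\ref{fact:beta-de-finetti} with $(a,b)=(L,1)$, the probability that $A_r\ge B_r$ for all $r\ge0$. This is an upper bound on the failure probability that uses every visit; if there are only finitely many visits, the event is only easier.

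Next, bound the urn probability using the Beta representation. Given $\Theta\sim\mathrm{Beta}(L,1)$, the increments are i.i.d.\ Bernoulli$(\Theta)$. The difference $A_r-B_r=L-1+S_r$ is then a random walk started at $L-1\ge0$ with steps $\pm1$ and drift $2\Theta-1$. Failure means that this walk hits $-1$, i.e.\ that $S_r$ drops by $L$ below its start. For a biased walk with up-probability $\theta>1/2$, the gambler's ruin formula gives $((1-\theta)/\theta)^{L}$, and for $\theta\le1/2$ the probability is bounded by $1$. Hence
\[
        \Pp(D_j^c\mid\mathcal F_{T_j})\le \E\bigl[\min\{1,((1-\Theta)/\Theta)^{L}\}\bigr].
\]
Since $\Theta$ has density $L\theta^{L-1}$ on $[0,1]$, it concentrates near $1$. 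We have $\Pp(\Theta\le 3/4)=(3/4)^L$, and on $\{\Theta>3/4\}$ the ruin factor is at most $3^{-L}$. So the failure probability is at most $(3/4)^L+3^{-L}\le 2(3/4)^L$.

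The remaining issue is to convert this into $\exp(-cL)$ for every $L\ge1$. For large $L$ the bound $2(3/4)^L\le\exp(-cL)$ holds with a suitable $c$. For small $L$, including $L=1$, one only needs the success probability to be positive and bounded below by $1-\exp(-cL)$ for a small $c$. With $a=L\ge1$ and $b=1$, the first step already succeeds with probability $L/(L+1)\ge1/2$, after which the walk has positive escape probability. So the success probability is some $\delta_L>0$, and shrinking $c$ handles the finitely many small $L$. Alternatively, one can compute the failure probability exactly as a function of $L$ and check that it is strictly less than $1$ and exponentially small.

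The step I expect to need the most care is the reduction in the first paragraph. One must verify that conditioning on $\mathcal F_{T_j}$ and applying part~2 of Theorem~\ref{thm:aggregation} to the block $\{1,j\}$ is legitimate. The issue is that table $j$ has just been opened at $T_j$, so $j\le K_{T_j}$, and the block does lie within $[K_{T_j}]$. One must also check that the bound is uniform over the aggregated sequence, since the estimate depends on it only through the number of visits, which only helps.
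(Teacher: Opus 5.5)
Your proposal is correct and follows essentially the same route as the paper. The paper applies Theorem~\ref{thm:aggregation} with the single non-singleton block $\{1,j\}$ to reduce to the fixed two-table urn started from $(L_j,1)$. It then bounds the failure probability via the $\mathrm{Beta}(L_j,1)$ mixing variable and gambler's ruin, splitting at $\Theta\ge 2/3$ and computing the case $L_j=1$ explicitly as $\log 2<1$. You split at $3/4$ instead and dispose of the finitely many small values of $L$ by a positivity argument, which is an equally valid minor variation.
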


\begin{proposition}\label{prop:posicor}
For $n\ge2$, $m\ge1$, $j_1,\ldots,j_m>n$, and $a_1,\ldots,a_m\ge1$,
\begin{equation}\label{eq:posicor}
\begin{aligned}
&\Pp\Biggl(
        \bigcap_{j=2}^{n}D_j\cap
        \bigcap_{k=1}^{m}\{L_{j_k}\ge a_k\}
        \,\Bigm|\,\mathcal F_{T_n}\Biggr) \\
&\quad\ge
        \Pp\Biggl(
        \bigcap_{j=2}^{n-1}D_j\cap
        \bigcap_{k=1}^{m}\{L_{j_k}\ge a_k\}
        \,\Bigm|\,\mathcal F_{T_n}\Biggr)
        \Pp(D_n\mid\mathcal F_{T_n})
        \qquad\text{a.s.}
\end{aligned}
\end{equation}
\end{proposition}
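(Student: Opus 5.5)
The plan is to condition at the stopping time $T=T_n$ and use Theorem~\ref{thm:aggregation} with the partition that groups the two tables $1$ and $n$ into one block $B_1=\{1,n\}$ and leaves every other label as a singleton. This is admissible because $n=K_{T_n}$, so both tables are occupied at time $T_n$. By Theorem~\ref{thm:aggregation}, conditionally on $\mathcal F_{T_n}$ the following hold.
\begin{enumerate}
\item The aggregated sequence $G:=(C^{\mathcal B}_k:k>T_n)$ is a Simon-type restaurant started from the block occupancies at time $T_n$.
\item Conditionally on $G$, the within-block sequence for $\{1,n\}$ is the fixed two-table restaurant started from $(a,b)=(L_n,1)$.
\end{enumerate}
Encode this within-block sequence by $\xi=(\1_{\{Y_r=1\}}:r\ge1)$, where $Y_r$ is the table chosen at the $r$-th visit after $T_n$. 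Its conditional law given $(\mathcal F_{T_n},G)$ depends only on $L_n$, so $\xi$ is conditionally independent of $G$ given $\mathcal F_{T_n}$.

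Next I would express both events in \eqref{eq:posicor} as functions of $(\mathcal F_{T_n},G,\xi)$ and check monotonicity in $\xi$.

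\emph{The event $D_n$.} Before $T_n$ we have $N_n=0$, and $N_1,N_n$ change only at visits to the block. Hence $D_n$ is the event that $L_n+\sum_{s\le r}\xi_s\ge 1+\sum_{s\le r}(1-\xi_s)$ for all $r\ge0$. This is an increasing function of $\xi$ alone, so $\Pp(D_n\mid\mathcal F_{T_n},G)=\Pp(D_n\mid\mathcal F_{T_n})$.

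\emph{The event $A:=\bigcap_{j=2}^{n-1}D_j\cap\bigcap_k\{L_{j_k}\ge a_k\}$.} Write $V_m$ for the number of visits to the block during $(T_n,m]$. For any time $m\ge T_n$,
\[
N_1^{(m)}=L_n+\sum_{s\le V_m}\xi_s .
\]
Here $V_m$ is determined by $G$, and for every $j\neq1,n$ the count $N_j^{(m)}$ is determined by $\mathcal F_{T_n}$ and $G$. The parts of the $D_j$ constraints at times $m\le T_n$ are $\mathcal F_{T_n}$-measurable. Since $j_k>n$ is a singleton block born after $T_n$, the time $T_{j_k}$ is determined by $G$, and $L_{j_k}=N_1^{(T_{j_k})}$. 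Consequently, for fixed $(\mathcal F_{T_n},G)$, the indicator $\1_A$ is an increasing function of $\xi$: raising any $\xi_s$ from $0$ to $1$ can only increase $N_1$ at every time and leaves all other relevant counts unchanged.

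Applying Proposition~\ref{thm:correlation} to the conditional law of $\xi$ given $(\mathcal F_{T_n},G)$ then gives
\[
\Pp(A\cap D_n\mid\mathcal F_{T_n},G)\ge \Pp(A\mid\mathcal F_{T_n},G)\,\Pp(D_n\mid\mathcal F_{T_n}).
\]
Taking conditional expectation given $\mathcal F_{T_n}$ yields \eqref{eq:posicor}.

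I expect the main obstacle to be bookkeeping rather than any deep step. One has to justify that Theorem~\ref{thm:aggregation} may be used in this conditional form, in which the increasing functional of $\xi$ is allowed to depend on $G$. One also has to verify carefully that every constraint in $A$ involves table $1$ only through $N_1$ and never through $N_n$. This holds because $j\le n-1$ or $j>n$, so table $n$ never appears as a challenger in $A$, and hence monotonicity in $\xi$ is genuine.
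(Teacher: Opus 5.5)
Your proposal is correct and is essentially the paper's proof: the paper uses the partition $\{\{1,n\},\{2,\ldots,n-1\},\{n+1\},\ldots\}$ and conditions on the aggregated sequence together with the within-block sequence of $\{2,\ldots,n-1\}$. That carries the same information as your aggregated sequence with $2,\ldots,n-1$ kept as singletons, and the paper then applies Proposition~\ref{thm:correlation} to the two-table indicator sequence $\xi$ exactly as you do. Your explicit check that $\1_A$ and $\1_{D_n}$ are increasing in $\xi$, via $N_1^{(m)}=L_n+\sum_{s\le V_m}\xi_s$, spells out a step the paper only asserts.
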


\begin{proposition}\label{prop:io}
For every $\eps>0$,
\begin{equation}\label{eq:io}
        \Pp\bigl(L_{2^k}<2^{(1-\eps)pk}\text{ for infinitely many }k\bigr)=0.
\end{equation}
\end{proposition}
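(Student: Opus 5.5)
We need to prove Proposition~\ref{prop:io}: almost surely, $L_{2^k}\ge 2^{(1-\eps)pk}$ for all large $k$. Here $L_j$ is the size of table $1$ at the moment table $j$ is opened.

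The plan is to separate the two clocks involved: the arrival time $T_j$ and the growth of $N_1$ along arrival times. We combine two facts.

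The first fact is that $T_j$ grows linearly in $j$. The indicators $\1_{\{C_{n+1}=K_n+1\}}$ are i.i.d.\ Bernoulli$(1-p)$ by \eqref{eq:crp-transition}. Hence $K_n/n\to1-p$ a.s. by the strong law of large numbers, and therefore $T_j/j\to 1/(1-p)$ a.s. In particular, for any $\delta>0$ we have $T_{2^k}\ge 2^{(1-\delta)k}$ for all large $k$ a.s. (Only a lower bound on $T_j$ is needed, since $N_1^{(n)}$ is nondecreasing in $n$.)

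The second fact is that $N_1^{(n)}\ge n^{p(1-\delta)}$ for all large $n$ a.s. Consider $M_n:=N_1^{(n)}/\prod_{m=1}^{n-1}(1+p/m)$. From the transition $\E[N_1^{(n+1)}\mid\mathcal F_n]=N_1^{(n)}(1+p/n)$, the process $M_n$ is a positive martingale, and the product is of order $n^p$. Thus $n^{-p}N_1^{(n)}\to X_1$ a.s. Equivalently, this is the Yule embedding in which table $1$ grows as a rate-$p$ Yule process while the total population grows at rate $1$. The key point is that $X_1>0$ a.s.; this is also recorded in Remark~\ref{rem:limit-pn} through Baur--Bertoin.

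For a self-contained argument I would prove $X_1>0$ via the Yule embedding. In continuous time, let each customer reproduce at rate $1$, with the child joining the parent's table with probability $p$. Then table $1$ is a Yule process of rate $p$ started at $1$, so $\ee^{-pt}N_1(t)\to W\sim\mathrm{Exp}(1)>0$. The total population is a rate-$1$ Yule process with $\ee^{-t}n(t)\to W'\in(0,\infty)$. Combining the two gives $N_1^{(n)}\sim W (n/W')^p$, so $X_1>0$ a.s.

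Putting the two facts together: for large $k$, $L_{2^k}=N_1^{(T_{2^k})}\ge \tfrac12 X_1\,T_{2^k}^{\,p}\ge \tfrac12X_1 2^{(1-\delta)pk}$, which exceeds $2^{(1-\eps)pk}$ once $\delta<\eps$ and $k$ is large, because $X_1>0$. Hence only finitely many $k$ violate the bound, and \eqref{eq:io} follows.

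The main obstacle is certifying $X_1>0$ a.s. rather than just convergence. The martingale gives convergence, but positivity needs either the Yule/exponential identification or an $L^2$ bound together with a $0$--$1$ argument. I would use the Yule embedding, checking that the embedded jump chain has exactly the law \eqref{eq:crp-transition}: at each birth, the parent is uniform among present customers, and the child joins the parent's table with probability $p$.
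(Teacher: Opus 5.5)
Your proof is correct for $0<p<1$, and it takes a different route from the paper. At $p=0$ the claim is trivial because $L_{2^k}\ge1=2^{0}$; there your factor $\tfrac12 X_1=\tfrac12$ breaks the final strict comparison, but nothing is lost. At $p=1$ no table is ever opened, and the paper's argument excludes that case as well.

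The paper argues quantitatively. Lemma~\ref{lem:L-lower} gives $\Pp(L_j<j^{(1-\eps)p})\le Cj^{-c}$, using a Chernoff bound for $T_j$ together with large-deviation bounds for $Z^{(p)}(t)$ and $\tau_n$. Those bounds come directly from the geometric marginal \eqref{eq:yule-geometric}. Borel--Cantelli is then applied along $j=2^k$, where the bounds $C2^{-ck}$ are summable.

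You argue pathwise instead. You use the a.s.\ limit $n^{-p}N_1^{(n)}\to X_1\in(0,\infty)$, obtained from the Yule martingale limits $\ee^{-pt}Z^{(p)}(t)\to W$ and $\ee^{-t}Z^{(1)}(t)\to W'$, combined with a lower bound on $T_j$. Your strong-law step is more than you need: $K_n\le n$ gives $T_j\ge j$ deterministically, so $L_j\ge N_1^{(j)}$. Your argument then gives the stronger statement that a.s.\ $L_j\ge j^{(1-\eps)p}$ for all large $j$, not only along $2^k$.

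Your route requires the extra input that the Yule martingale limit is a.s.\ positive, and it gives no rate. The paper's route only uses one-dimensional marginals and produces the explicit tail bound of Lemma~\ref{lem:L-lower}. That bound is reused in Section~\ref{sec:regularity}, via the monotone coupling from $p=a$, for the uniform-in-$p$ estimate \eqref{eq:regularity-uniform-L-lower}. For Proposition~\ref{prop:io} itself only the almost-sure statement is used, so your shortcut costs nothing there.
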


\begin{proof}[Proof of Theorem~\ref{thm:crp-main} assuming Propositions~\ref{prop:prob}--\ref{prop:io}]
Note that
\begin{equation}\label{eq:representation}
        \bigl\{N_1^{(n)}\ge N_j^{(n)}
        \text{ for all }j\ge1\text{ and all }n\ge1\bigr\}
        =\bigcap_{j=2}^\infty D_j .
\end{equation}
Fix $\eps\in(0,1)$ and put $\alpha=(1-\eps)p$.  By Proposition~\ref{prop:io},
\[
        G_N:=\bigcap_{k=N}^\infty\{L_{2^k}\ge2^{\alpha k}\}
\]
satisfies $\Pp(G_N)>0$ for at least one deterministic $N$.  Fix such an $N$.

For $r\ge N$ and $1\le\ell\le2^r-1$, set
\[
        H_{\ell,r}:=
        \bigcap_{j=2}^{\ell}D_j\cap
        \bigcap_{k=N}^{r}\{L_{2^k}\ge2^{\alpha k}\}.
\]
We claim that there exists $c_1>0$ such that, for every $2\le\ell\le2^r-1$,
\begin{equation}\label{eq:one-step}
        \Pp(H_{\ell,r})
        \ge \bigl(1-\exp(-c_1\ell^\alpha)\bigr)\Pp(H_{\ell-1,r}).
\end{equation}
Indeed, write $q(\ell)=\lfloor\log_2\ell\rfloor$ and put
\[
        A_\ell:=\bigcap_{k=N}^{q(\ell)}\{L_{2^k}\ge2^{\alpha k}\},
\]
with the convention that this is the whole space if $q(\ell)<N$.  By Proposition~\ref{prop:posicor},
\[
\begin{aligned}
\Pp(H_{\ell,r}\mid\mathcal F_{T_\ell})
&=\1_{A_\ell}\,
  \Pp\Biggl(
        \bigcap_{j=2}^{\ell}D_j\cap
        \bigcap_{k=N\vee(q(\ell)+1)}^{r}\{L_{2^k}\ge2^{\alpha k}\}
        \,\Bigm|\,\mathcal F_{T_\ell}\Biggr)\\
&\ge \1_{A_\ell}\,
  \Pp\Biggl(
        \bigcap_{j=2}^{\ell-1}D_j\cap
        \bigcap_{k=N\vee(q(\ell)+1)}^{r}\{L_{2^k}\ge2^{\alpha k}\}
        \,\Bigm|\,\mathcal F_{T_\ell}\Biggr)
  \Pp(D_\ell\mid\mathcal F_{T_\ell}).
\end{aligned}
\]
On $A_\ell$, if $\ell\ge2^N$,
\[
        L_\ell\ge L_{2^{q(\ell)}}\ge 2^{\alpha q(\ell)}
        \ge2^{-\alpha}\ell^\alpha.
\]
For $\ell<2^N$ we use only $L_\ell\ge1$.  Hence, by Proposition~\ref{prop:prob}, there exists $c_1>0$ such that, on \(A_\ell\),
\[
\begin{aligned}
        \Pp(D_\ell\mid\mathcal F_{T_\ell})
        \ge1-\exp(-cL_\ell)\ge1-\exp(-c_1\ell^\alpha),
\end{aligned}
\]
where $c$ is the constant in Proposition~\ref{prop:prob}.  Taking expectations yields \eqref{eq:one-step}.

Iterating \eqref{eq:one-step} for $\ell=2,3,\ldots,2^r-1$ gives
\begin{equation}\label{eq:finite-lower}
        \Pp(H_{2^r-1,r})
        \ge
        \Pp\Bigl(\bigcap_{k=N}^r\{L_{2^k}\ge2^{\alpha k}\}\Bigr)
        \prod_{\ell=2}^{2^r-1}\bigl(1-\exp(-c_1\ell^\alpha)\bigr).
\end{equation}
Letting $r\to\infty$ in \eqref{eq:finite-lower} yields
\[
        \Pp\Bigl(\bigcap_{j=2}^\infty D_j\cap G_N\Bigr)
        \ge
        \Pp(G_N)\prod_{\ell=2}^\infty
        \bigl(1-\exp(-c_1\ell^\alpha)\bigr)>0,
\]
and the conclusion follows from \eqref{eq:representation}.
\end{proof}

We first record a fixed-table estimate.  In the fixed two-table restaurant described above, assume $A_0=n$ and $B_0=1$, and let
\[
        \mathcal D_2=D_2(n):=\{A_r\ge B_r\text{ for all }r\ge0\}.
\]

\begin{lemma}\label{lem:dominate}
There exists a universal constant $c>0$ such that
\[
        \Pp\bigl(\mathcal D_2^c\bigr)\le \exp(-cn),\qquad n\ge1.
\]
\end{lemma}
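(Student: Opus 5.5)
We prove the bound by reducing the Pólya urn to a simple random walk. In the fixed two-table restaurant with $A_0=n$ and $B_0=1$, the difference $A_r-B_r$ moves by $\pm1$ at each visit. It is pushed upward with probability $A/(A+B)$, which is at least $1/2$ as long as $A\ge B$. On $\mathcal D_2^c$ there is a first visit $r$ at which $A_r<B_r$, and at that visit the difference steps from $0$ to $-1$. Up to that time, $A\ge B$ holds throughout, so each step of the difference is upward with conditional probability at least $1/2$. The natural coupling then gives that $A_r-B_r$ stochastically dominates a symmetric simple random walk started at $n-1$, up to the first time the difference becomes negative.

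This domination is too weak, however: a symmetric walk started at $n-1$ hits $-1$ with probability one. We therefore need a drift. The plan is to exploit that, while $A$ is much larger than $B$, the upward probability is close to $1$. Concretely, as long as $A_r\ge 2B_r$, the step is upward with probability at least $2/3$. We would then split $\mathcal D_2^c$ according to whether the ratio $A/B$ ever drops below $2$.

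\textbf{Step 1.} We show that the event $\{A_r<2B_r\text{ for some }r\}$ has probability at most $\ee^{-cn}$. For this we use Fact~\ref{fact:beta-de-finetti}. Conditionally on $\Theta\sim\mathrm{Beta}(n,1)$, the visits are i.i.d.\ Bernoulli$(\Theta)$, and $\Pp(\Theta\le 3/4)=(3/4)^n$. On $\{\Theta>3/4\}$, the process $A_r-2B_r$ is a random walk started at $n-2$ with steps $+1$ (probability $\Theta$) and $-2$ (probability $1-\Theta$). This walk has positive drift $3\Theta-2\ge 1/4$. Its probability of ever going below $0$ is at most $\lambda^{-(n-2)}$, where $\lambda>1$ solves $\Theta\lambda+(1-\Theta)\lambda^{-2}=1$ (a standard exponential martingale/Gambler's ruin bound). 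This root is bounded away from $1$ uniformly for $\Theta\ge 3/4$.

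\textbf{Step 2.} The inclusion $\{A_r<B_r\}\subseteq\{A_r<2B_r\}$ then gives $\Pp(\mathcal D_2^c)\le (3/4)^n+C\lambda_0^{-n}$ for $n\ge 3$. The finitely many small $n$ are absorbed by shrinking $c$, since each such probability is strictly less than $1$. For instance, with probability $\Theta^{\,\cdot}$ positive, the first visits all go to table $1$ and the walk then stays away forever. Alternatively, and even more simply, we can skip the factor of $2$: conditionally on $\Theta>1/2$, the walk $A_r-B_r$ has drift $2\Theta-1$, and ruin from $n-1$ has probability exactly $((1-\Theta)/\Theta)^{n}$. Averaging over $\mathrm{Beta}(n,1)$ with density $n\theta^{n-1}$ gives
\[
\Pp(\mathcal D_2^c)=\int_0^1\min\Bigl(1,\bigl(\tfrac{1-\theta}{\theta}\bigr)^{n}\Bigr)\,n\theta^{n-1}\dd\theta\le (3/4)^n+(1/3)^n,
\]
after splitting at $\theta=3/4$. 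This yields the claim with a universal $c$.

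The main obstacle is ensuring uniformity in $n$ down to $n=1$. There the bound must still be strictly less than $1$, which holds since $\Pp(\mathcal D_2(1))\ge\Pp(\Theta>1/2)\cdot\inf(\cdots)>0$. The remaining estimates are routine.
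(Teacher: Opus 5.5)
Your proposal is correct, and the second route in Step~2 is essentially the paper's proof: condition on $\Theta\sim\mathrm{Beta}(n,1)$ via Fact~\ref{fact:beta-de-finetti}, use the exact ruin probability $((1-\Theta)/\Theta)^n$ for the walk $A_r-B_r$ started at $n-1$, split the Beta integral at a threshold (the paper uses $2/3$, you use $3/4$), and treat $n=1$ separately because the split bound exceeds $1$ there. Your $n=1$ justification is loose (an infimum over $\theta>1/2$ is $0$), but your own exact formula already gives $\Pp(\mathcal D_2^c)=\int_0^1\min\bigl(1,\frac{1-\theta}{\theta}\bigr)\dd\theta=\log 2<1$, which is the paper's computation; the $A-2B$ detour in Step~1 is unnecessary, and its root equation should read $\Theta\lambda^{-1}+(1-\Theta)\lambda^{2}=1$, since yours has no root $\lambda>1$.
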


\begin{proof}
By Fact~\ref{fact:beta-de-finetti}, conditionally on $\Theta$, the difference $A_r-B_r$ is a nearest-neighbour random walk with increments $+1$ with probability $\Theta$ and $-1$ with probability $1-\Theta$, started from $n-1$.  If $\Theta\ge2/3$, the probability that this walk ever makes table $2$ strictly larger than table $1$ is at most $((1-\Theta)/\Theta)^n\le2^{-n}$.  Since $\Theta\sim\mathrm{Beta}(n,1)$,
\[
        \Pp(\Theta<2/3)=(2/3)^n.
\]
For $n=1$ we compute directly
\[
        \Pp(\mathcal D_2^c)
        \le\int_0^{1/2}1\,\dd\theta
        +\int_{1/2}^1\frac{1-\theta}{\theta}\,\dd\theta
        =\log2<1.
\]
For $n\ge2$, the preceding bound gives there exists a universal constant $c>0$ such that
\[
        \Pp\bigl(\mathcal D_2^c\bigr)
        \le2^{-n}+(2/3)^n\le\exp(-cn).\qedhere
\]
\end{proof}

\begin{proof}[Proof of Proposition~\ref{prop:prob}]
At time $T_j$, table $j$ has exactly one customer and table $1$ has $L_j$ customers.  Before time $T_j$, table $j$ is absent, so $D_j$ can fail only after $T_j$.  Apply Theorem~\ref{thm:aggregation} with the partition whose only non-singleton block is $\{1,j\}$.  Conditionally on $\mathcal F_{T_j}$, the future comparison between tables $1$ and $j$, restricted to visits to these two tables, is a fixed two-table restaurant with initial occupancies $L_j$ and $1$.  Lemma~\ref{lem:dominate} gives \eqref{eq:estimate}.
\end{proof}

\begin{proof}[Proof of Proposition~\ref{prop:posicor}]
Let $\mathbb Q$ be a regular conditional law given $\mathcal F_{T_n}$.  Apply Theorem~\ref{thm:aggregation} to the partition
\[
        \mathcal B_n:=\bigl\{\{1,n\},\{2,3,\ldots,n-1\},\{n+1\},\{n+2\},\ldots\bigr\},
\]
with the empty middle block omitted when $n=2$.  Recall the notation $C^{\mathcal B}$ and $C_B^{(T)}$ from Definitions~\ref{def:aggregated-sequence} and \ref{def:within-block-sequence}.  Put
\[
        \mathcal G:=\sigma\bigl((C_k^{\mathcal B_n}:k>T_n),\,C_{\{2,\ldots,n-1\}}^{(T_n)}\bigr),
        \qquad
        \xi:=\bigl(\1_{\{C_{\{1,n\}}^{(T_n)}(r)=1\}}:r\ge1\bigr).
\]
By Theorem~\ref{thm:aggregation}, $\xi$ is independent of $\mathcal G$, and conditionally on $\mathcal G$ its law is that of a fixed two-table restaurant started from $(L_n,1)$.  Let
\[
        E:=\bigcap_{j=2}^{n-1}D_j\cap\bigcap_{k=1}^m\{L_{j_k}\ge a_k\},
        \qquad
        F:=D_n.
\]
Conditionally on $\mathcal G$, there are bounded increasing functions $f,g$ on $\{0,1\}^{\mathbb N}$ such that $\1_E=f(\xi)$ and $\1_F=g(\xi)$.  Hence Proposition~\ref{thm:correlation} gives
\[
        \mathbb Q(E\cap F\mid\mathcal G)
        \ge \mathbb Q(E\mid\mathcal G)\,\mathbb Q(F\mid\mathcal G)
        = \mathbb Q(E\mid\mathcal G)\,\mathbb Q(F).
\]
Taking $\mathbb Q$-expectations yields
\[
        \mathbb Q(E\cap F)\ge \mathbb Q(E)\,\mathbb Q(F),
\]
which is \eqref{eq:posicor}.
\end{proof}

Let $Z^{(q)}$ denote a Yule process with birth rate $q>0$, started from one individual, and let
\[
        \tau_n:=\inf\{t\ge0:Z^{(1)}(t)=n\}.
\]
We shall use the standard Yule embedding from Baur--Bertoin~\cite[First proof of Theorem~2(i)]{baur2015fragmentation}.

\begin{lemma}
There is a coupling of the first-table process with two Yule processes $Z^{(1)}$ and $Z^{(p)}$ such that
\begin{equation}\label{eq:yule-coupling}
        N_1^{(n)}=Z^{(p)}(\tau_n),\qquad n\ge1.
\end{equation}
\end{lemma}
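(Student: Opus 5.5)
We need to prove the coupling lemma, $N_1^{(n)}=Z^{(p)}(\tau_n)$ for all $n\ge1$. The plan is a continuous-time embedding. Let $Z^{(1)}$ be a Yule process of rate $1$ that records the total number of customers. Each individual reproduces at rate $1$, and each birth in $Z^{(1)}$ is a new customer. Mark each birth independently: with probability $p$ it is an "open" birth, and then the child copies the table label of its parent; with probability $1-p$ it is a "closed" birth, and the child opens a new table.

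\emph{Step 1 (embedded chain).} At the jump time $\tau_n$ there are $n$ individuals. By memorylessness of the exponential clocks, the next individual to reproduce is uniform among the $n$ present, independently of the past. Hence, at the jump times, the marked process seats customer $n+1$ at the table of a uniformly chosen previous customer with probability $p$, and at a new table with probability $1-p$. This is exactly \eqref{eq:crp-transition}. Induction on $n$, as in Proposition~\ref{prop:cluster-crp}, then shows that the table assignment read off at times $\tau_1=0<\tau_2<\cdots$ has the law of $(C_i)$.

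\emph{Step 2 (the first table is a Yule process of rate $p$).} Let $Z^{(p)}(t)$ be the number of individuals at time $t$ carrying table label $1$, starting from the root alone. Each such individual carries a rate-$1$ reproduction clock. Independent thinning with retention $p$ makes open births of label-$1$ individuals occur at rate $p$ per individual, and these are the only births that increase $Z^{(p)}$. Closed births and births by other individuals do not change $Z^{(p)}$. So $Z^{(p)}$ is a pure birth process with rate $p$ times its current size, that is, a Yule process with birth rate $p$ started from one. By construction $N_1^{(n)}=Z^{(p)}(\tau_n)$, which is \eqref{eq:yule-coupling}.

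\emph{Main obstacle.} The only point needing care is Step 2: one must check that $Z^{(p)}$ is marginally a Yule process even though it is defined jointly with $Z^{(1)}$. The issue is that the thinning marks and the clocks of the other individuals are coupled to it through $Z^{(1)}$.

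The cleanest route is to construct everything from independent ingredients: each individual gets an i.i.d. Poisson process of rate $1$, and each of its points gets an independent Bernoulli$(p)$ mark. Splitting a Poisson process by independent marks gives two independent Poisson processes, of rates $p$ and $1-p$. The label-$1$ population therefore evolves using only the rate-$p$ open points of label-$1$ individuals, and these are independent of everything else. This yields the Yule$(p)$ law directly via the standard branching-process construction, as in Baur--Bertoin~\cite{baur2015fragmentation}.
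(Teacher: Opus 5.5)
Your proposal is correct and follows the paper's proof. Both use the rate-one continuous-time embedding with independent $p$-marks, read off the restaurant at the jump times $\tau_n$ of the total population, and identify the first-table population as a rate-$p$ Yule process; your explicit construction from independent marked Poisson clocks just spells out the independence that the paper's short argument uses implicitly.
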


\begin{proof}
For completeness, we recall the short argument.  Run the continuous-time version in which each customer gives births at rate $1$.  Each birth joins the parent's table with probability $p$ and opens a new table with probability $1-p$, independently of everything else.  Observed at birth times, this has the transition probabilities \eqref{eq:crp-transition}.  The total population is a Yule process $Z^{(1)}$, while births that keep the first-table label occur from first-table customers at total rate $p$ times the current first-table size.  Hence the first table evolves as a Yule process $Z^{(p)}$, observed at the stopping times $\tau_n$.
\end{proof}

We shall use the elementary one-dimensional distribution of the Yule process; see, for example, Durrett~\cite[Theorem~4.2, equation~(4.14)]{durrett2016essentials}:
\begin{equation}\label{eq:yule-geometric}
        \Pp(Z^{(q)}(t)=m)=\ee^{-qt}\bigl(1-\ee^{-qt}\bigr)^{m-1},
        \qquad m\ge1.
\end{equation}

\begin{lemma}\label{lem:yule-ld}
For every $\eps\in(0,1)$ and $q>0$, there are constants $C,c>0$ such that, for all $t\ge1$,
\[
        \Pp\bigl(Z^{(q)}(t)<\ee^{(1-\eps)qt}\bigr)\le C\ee^{-c\eps q t},
        \qquad
        \Pp\bigl(Z^{(q)}(t)>\ee^{(1+\eps)qt}\bigr)\le C\ee^{-c\ee^{\eps qt}}.
\]
Moreover, for $n\ge2$,
\[
        \Pp\bigl(\tau_n\notin ((1-\eps)\log n,(1+\eps)\log n)\bigr)
        \le Cn^{-c\eps}.
\]
\end{lemma}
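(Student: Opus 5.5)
The plan is to derive all three bounds directly from the geometric law \eqref{eq:yule-geometric}, together with a standard representation of the hitting times $\tau_n$.

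\emph{Lower tail.} By \eqref{eq:yule-geometric}, $Z^{(q)}(t)$ is geometric with success parameter $\ee^{-qt}$. Hence, for any $m\ge1$,
\[
        \Pp\bigl(Z^{(q)}(t)<m\bigr)=1-\bigl(1-\ee^{-qt}\bigr)^{m-1}\le (m-1)\ee^{-qt}.
\]
Taking $m=\ee^{(1-\eps)qt}$ gives the bound $\ee^{-\eps qt}$. This is the first inequality with $C=c=1$.

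\emph{Upper tail.} Again by \eqref{eq:yule-geometric},
\[
        \Pp\bigl(Z^{(q)}(t)>m\bigr)=\bigl(1-\ee^{-qt}\bigr)^{\lfloor m\rfloor}\le\exp\bigl(-\lfloor m\rfloor\ee^{-qt}\bigr).
\]
With $m=\ee^{(1+\eps)qt}$ this is at most $\exp(-\ee^{\eps qt}+1)$, since $\lfloor m\rfloor\ge m-1$ and $\ee^{-qt}\le1$. So the second inequality holds with $C=\ee$ and $c=1$.

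\emph{Hitting times.} For $\tau_n$ I would use the fact that $\{\tau_n\le s\}=\{Z^{(1)}(s)\ge n\}$, because $Z^{(1)}$ is nondecreasing and right-continuous with unit jumps. This gives two estimates.
\begin{itemize}
\item Lower deviation of $\tau_n$: $\Pp(\tau_n\le(1-\eps)\log n)=\Pp\bigl(Z^{(1)}(s)\ge n\bigr)$ with $s=(1-\eps)\log n$. Since $n=\ee^{s/(1-\eps)}\ge \ee^{(1+\eps)s}$, this is bounded by the upper-tail estimate, giving $C\exp\bigl(-c\,\ee^{\eps s}\bigr)=C\exp\bigl(-c\,n^{\eps(1-\eps)}\bigr)$. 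This is much smaller than $n^{-c\eps}$.
\item Upper deviation of $\tau_n$: $\Pp(\tau_n\ge(1+\eps)\log n)\le\Pp\bigl(Z^{(1)}(s)\le n\bigr)$ with $s=(1+\eps)\log n$. This equals $1-(1-\ee^{-s})^{n}\le n\ee^{-s}=n^{-\eps}$.
\end{itemize}

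\emph{Remaining details.} One must handle the restriction $t\ge1$ and small $n$, for instance when $s<1$ or when $n$ is small. This only requires enlarging $C$, since probabilities are at most $1$ and the range concerned is bounded. Alternatively, one can note that the geometric computations above do not actually need $t\ge1$. Open versus closed interval endpoints make no difference, because $\tau_n$ has a continuous law (it is a sum of independent exponentials with rates $1,\dots,n-1$).

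The only mildly delicate step is the translation between $\tau_n$ and $Z^{(1)}$: the inequality $\ge$ versus $>$ and the integer rounding must be tracked. Everything else is the explicit geometric formula, so I expect no genuine obstacle.
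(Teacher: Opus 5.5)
Your proposal is correct and follows the paper's route: both tail bounds come directly from the geometric law \eqref{eq:yule-geometric}, and the $\tau_n$ bound comes from evaluating $Z^{(1)}$ at times $(1\pm\eps)\log n$ via $\{\tau_n\le s\}=\{Z^{(1)}(s)\ge n\}$, which is exactly what the paper's short proof indicates. One cosmetic point: for non-integer $m$ the lower-tail identity should read $1-(1-\ee^{-qt})^{\lceil m\rceil-1}$, which still gives at most $m\ee^{-qt}=\ee^{-\eps qt}$.
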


\begin{proof}
The first two bounds follow directly from \eqref{eq:yule-geometric}.  The bounds for $\tau_n$ follow by applying these estimates to $Z^{(1)}((1-\eps)\log n)$ and $Z^{(1)}((1+\eps)\log n)$.
\end{proof}

\begin{lemma}\label{lem:root-ld}
For every $\eps\in(0,1)$, there are constants $C,c>0$ such that, for all $n\ge2$,
\[
        \Pp\bigl(N_1^{(n)}<n^{(1-\eps)p}\bigr)\le Cn^{-c}.
\]
\end{lemma}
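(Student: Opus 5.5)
The plan is to use the Yule coupling \eqref{eq:yule-coupling}, $N_1^{(n)}=Z^{(p)}(\tau_n)$, and split according to whether $\tau_n$ is atypically small. Fix $\eps\in(0,1)$ and choose a smaller auxiliary $\delta\in(0,1)$, for instance with $(1-\delta)^2\ge 1-\eps$. Put $t_n:=(1-\delta)\log n$. Since $Z^{(p)}$ is nondecreasing, the event $\tau_n\ge t_n$ gives $N_1^{(n)}\ge Z^{(p)}(t_n)$. It follows that
\[
        \Pp\bigl(N_1^{(n)}<n^{(1-\eps)p}\bigr)
        \le \Pp\bigl(\tau_n< t_n\bigr)
        +\Pp\bigl(Z^{(p)}(t_n)<n^{(1-\eps)p}\bigr).
\]
The important point is that the bound only needs monotonicity of $Z^{(p)}$ together with separate tail estimates for $\tau_n$ and for $Z^{(p)}$ at a deterministic time. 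We never need independence between $Z^{(1)}$ and $Z^{(p)}$, and indeed they are not independent, since the first table is part of the total population.

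The first term is bounded by Lemma~\ref{lem:yule-ld} applied with $\delta$ in place of $\eps$, which gives $\Pp(\tau_n<(1-\delta)\log n)\le Cn^{-c\delta}$. For the second term, note that $n^{(1-\eps)p}\le n^{(1-\delta)^2p}=\ee^{(1-\delta)p\,t_n}$. The event is therefore contained in $\{Z^{(p)}(t_n)<\ee^{(1-\delta)pt_n}\}$. By the first bound of Lemma~\ref{lem:yule-ld} with $q=p$, this has probability at most $C\ee^{-c\delta p t_n}=Cn^{-c\delta p(1-\delta)}$, provided $t_n\ge1$.

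For the finitely many small $n$ with $t_n<1$, the bound holds trivially once the constant $C$ is enlarged, since the probabilities are at most $1$. Combining the two terms gives $Cn^{-c'}$ with $c'=c\delta\min\{1,p(1-\delta)\}>0$. The case $p=0$ is degenerate, because then $N_1^{(n)}=1=n^0$ and the event is empty. When $p=1$ the statement is also trivial.

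The step that needs the most care, mild as it is, is checking that the lower-tail bound in Lemma~\ref{lem:yule-ld} really does follow from \eqref{eq:yule-geometric} with the claimed rate. We have $\Pp(Z^{(q)}(t)<m)=1-(1-\ee^{-qt})^{m-1}\le (m-1)\ee^{-qt}$. Taking $m=\ee^{(1-\delta)qt}$ gives at most $\ee^{-\delta qt}$, so the polynomial decay is available. Everything else is bookkeeping of exponents.
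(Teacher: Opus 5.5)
Your proof is correct and follows the paper's argument: a union bound over $\{\tau_n<(1-\delta)\log n\}$ and the lower tail of $Z^{(p)}$ at the deterministic time $(1-\delta)\log n$, with both terms controlled by Lemma~\ref{lem:yule-ld}. Your choice $(1-\delta)^2\ge1-\eps$ and your treatment of $p\in\{0,1\}$ and of small $n$ spell out bookkeeping that the paper leaves implicit.
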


\begin{proof}
Choose $\delta\in(0,\eps)$.  By the coupling \eqref{eq:yule-coupling},
\[
        \Pp\bigl(N_1^{(n)}<n^{(1-\eps)p}\bigr)
        \le \Pp\bigl(\tau_n<(1-\delta)\log n\bigr)
        +\Pp\bigl(Z^{(p)}((1-\delta)\log n)<n^{(1-\eps)p}\bigr).
\]
Both terms are $O(n^{-c})$ by Lemma~\ref{lem:yule-ld}, since $(1-\delta)p>(1-\eps)p$.
\end{proof}

\begin{lemma}\label{lem:L-lower}
For every $\eps\in(0,1)$, there are constants $C,c>0$ such that, for all $j\ge2$,
\[
        \Pp\bigl(L_j<j^{(1-\eps)p}\bigr)\le Cj^{-c}.
\]
\end{lemma}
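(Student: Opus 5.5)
Since $L_j=N_1^{(T_j)}$ and $N_1^{(n)}$ is nondecreasing in $n$, the plan is to show that $T_j$ is at least of order $j$ with high probability and then apply Lemma~\ref{lem:root-ld} at a deterministic time. Fix $\eps\in(0,1)$ and choose $\delta>0$ small enough that $(1-\delta)(1-\eps/2)\ge 1-\eps$ for large $j$; set $n_j:=\lceil \delta' j\rceil$ with a fixed small $\delta'\in(0,1)$, say $\delta'=(1-p)/2$. We split
\[
        \Pp\bigl(L_j<j^{(1-\eps)p}\bigr)
        \le \Pp(T_j< n_j)+\Pp\bigl(N_1^{(n_j)}<j^{(1-\eps)p}\bigr),
\]
using that on $\{T_j\ge n_j\}$ we have $L_j\ge N_1^{(n_j)}$.

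For the first term, note that $K_n-1$ is a sum of $n-1$ i.i.d.\ Bernoulli$(1-p)$ variables, since by \eqref{eq:crp-transition} each arriving customer opens a new table with probability $1-p$ independently of $\mathcal F_n$. Hence $\{T_j<n_j\}=\{K_{n_j}\ge j\}$ requires a Binomial$(n_j-1,1-p)$ variable to be at least $j-1$. If $n_j\le j-1$ this is impossible outright, so it suffices to take $\delta'<1$; then $\Pp(T_j<n_j)=0$ for $j$ large. (No large deviation estimate is even needed, because $K_n\le n$ deterministically; choosing $n_j=\lceil j/2\rceil$ works for $j\ge2$.) This is a rather crude choice: by the law of large numbers $T_j\approx j/(1-p)$, but the polynomial exponent forgives any constant factor.

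For the second term, take $n_j=\lceil j/2\rceil$. For $j$ large, $j^{(1-\eps)p}\le n_j^{(1-\eps/2)p}$, because $(j/2)^{(1-\eps/2)p}/j^{(1-\eps)p}=2^{-(1-\eps/2)p}j^{\eps p/2}\to\infty$. Therefore Lemma~\ref{lem:root-ld} with $\eps/2$ in place of $\eps$ gives
\[
        \Pp\bigl(N_1^{(n_j)}<j^{(1-\eps)p}\bigr)\le \Pp\bigl(N_1^{(n_j)}<n_j^{(1-\eps/2)p}\bigr)\le C n_j^{-c}\le C' j^{-c}.
\]
The finitely many small $j$ (including those with $n_j<2$) are absorbed by enlarging $C$, since probabilities are at most $1$.

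The only point requiring care is the monotone comparison $L_j\ge N_1^{(n_j)}$ on $\{T_j\ge n_j\}$ together with the trivial bound $T_j\ge j$, so I expect no real obstacle. Indeed, $T_j\ge j$ always holds, since at most one table opens per arrival. So one may even take $n_j=j$ directly: $L_j=N_1^{(T_j)}\ge N_1^{(j)}$, and Lemma~\ref{lem:root-ld} gives the claim immediately. I would present this simplest version.
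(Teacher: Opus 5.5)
Your final argument is correct, and it is simpler than the paper's proof. The paper does not use the deterministic bound $T_j\ge j$. Instead it sets $m_j:=\lfloor j/(2(1-p))\rfloor$ and applies a Chernoff bound to the i.i.d.\ Bernoulli$(1-p)$ table openings to get $\Pp(T_j<m_j)\le\exp(-c_1j)$. It then applies Lemma~\ref{lem:root-ld} at time $m_j$ with a smaller loss $\delta<\eps$ in the exponent, which absorbs the constant factor between $m_j$ and $j$ for large $j$.

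Your observation that $K_n\le n$ forces $T_j\ge j$ surely removes both the Chernoff step and the exponent adjustment. You get $L_j=N_1^{(T_j)}\ge N_1^{(j)}$ deterministically, and Lemma~\ref{lem:root-ld} with the same $\eps$ and $n=j\ge2$ gives the bound with the same constants $C,c$, with no error term and no large-$j$ restriction. The paper's route buys nothing of substance here: it only works at the longer time scale $m_j$, which is longer than $j$ only by a constant factor and only when $p>1/2$. That gains a constant factor in the lower bound, which is irrelevant at polynomial scale and plays no role in the later uses of the lemma (Proposition~\ref{prop:io} and the uniform tail estimate in Section~\ref{sec:regularity}).

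When you write it up, present only the two-line version and drop the preliminary discussion with $\delta$, $\delta'$ and $n_j=\lceil j/2\rceil$. That discussion also contains a small, harmless slip: $\{T_j<n_j\}$ equals $\{K_{n_j-1}\ge j\}$. It is only contained in $\{K_{n_j}\ge j\}=\{T_j\le n_j\}$, not equal to it.
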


\begin{proof}
After the first customer, the events that a new table is opened at subsequent arrivals are i.i.d. Bernoulli$(1-p)$.  Put $m_j:=\left\lfloor j/(2(1-p))\right\rfloor$.
A Chernoff bound gives
\[
        \Pp(T_j<m_j)\le \exp(-c_1j)
\]
for some $c_1=c_1(p)>0$.  On the event $\{T_j\ge m_j\}$, monotonicity of the first table gives $L_j=N_1^{(T_j)}\ge N_1^{(m_j)}$.  Choose $\delta\in(0,\eps)$.  For all sufficiently large $j$,
\(
        m_j^{(1-\delta)p}\ge j^{(1-\eps)p}.
\)
Therefore, by Lemma~\ref{lem:root-ld}, there exist $C,c>0$ such that
\[
        \Pp\bigl(L_j<j^{(1-\eps)p}\bigr)
        \le \Pp(T_j<m_j)+\Pp\bigl(N_1^{(m_j)}<m_j^{(1-\delta)p}\bigr)
        \le Cj^{-c}.\qedhere
\]
\end{proof}

\begin{proof}[Proof of Proposition~\ref{prop:io}]
By Lemma~\ref{lem:L-lower},
\[
        \sum_{k=1}^\infty
        \Pp\bigl(L_{2^k}<2^{(1-\eps)pk}\bigr)
        \le C\sum_{k=1}^\infty 2^{-ck}<\infty.
\]
The Borel--Cantelli lemma proves \eqref{eq:io}.
\end{proof}

\section{Proof of Theorem 1.2}\label{sec:regularity}
By Proposition~\ref{prop:cluster-crp}, we may couple the RRT percolation model and the Simon-type restaurant so that their labelled size processes agree.  Hence, for $p\in[0,1]$,
\[
        \theta(p)=\lim_{n\to\infty}\Pp\bigl(N_1^{(n)}\ge N_j^{(n)}\text{ for all }j\ge1\bigr),
        \qquad
        \rho(p)=\Pp\bigl(D_j\text{ holds for every }j\ge2\bigr),
\]
where the probabilities on the right-hand side are for the restaurant with parameter $p$.  It is therefore enough to prove the corresponding results in the restaurant model.
Theorem~\ref{thm:parameter} follows directly from Propositions~\ref{prop:regularity-monotonicity}--\ref{prop:regularity-zero-limit} below.

\subsection{Preliminaries}
\noindent\emph{Coupling.}
We introduce a continuous-time version of the restaurant process to couple CRPs with different parameters.  For $p\in(0,1]$, put $\lambda=(1-p)/p$.  In the continuous-time restaurant with parameter $\lambda$, each customer gives births at rate $1$ to its own table and at rate $\lambda$ to a new table.  Observed at birth times, this gives the Simon-type restaurant with parameter $p=1/(1+\lambda)$.  We refer to this continuous-time restaurant as the $\lambda$-process below.

For $0<p_1<p_2\le1$, set
\[
        \lambda_i=\frac{1-p_i}{p_i},
        \qquad
        \delta:=\lambda_1-\lambda_2>0.
\]
We couple the two continuous-time restaurants by starting from the $\lambda_2$-process and adding independent new-table births of rate $\delta$ per customer.  Each extra table is then equipped recursively with its own same-table and new-table birth clocks, so the retained and extra tables together have the law of the $\lambda_1$-process.  Deleting the extra tables and all their descendants recovers the $\lambda_2$-process, while the first table is unchanged.

\smallskip
\noindent\emph{Uniform tail estimate.}
The next lemma controls, uniformly for $p$ bounded away from $0$, the chance
that some sufficiently late table ever overtakes the first table; this is the
tail estimate needed for the finite-table approximation in the continuity
proof.

\begin{lemma}
For every $a\in(0,1]$,
\begin{equation}
        \sup_{p\in[a,1]}\Pp\bigl(\exists j\ge2^M:D_j^c\bigr)\longrightarrow0,
        \qquad M\to\infty.
        \label{eq:regularity-tail-unconditioned}
\end{equation}
\end{lemma}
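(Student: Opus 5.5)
We bound the tail event by a union bound and combine Proposition~\ref{prop:prob} with a uniform version of Lemma~\ref{lem:L-lower}. Fix $\eps=1/2$ and put $\alpha:=(1-\eps)a=a/2$, so that $\alpha\le(1-\eps)p$ for every $p\in[a,1]$. For $j\ge 2^M$ we split
\[
        \Pp(D_j^c)\le \E\bigl[\Pp(D_j^c\mid\mathcal F_{T_j})\1_{\{L_j\ge j^{\alpha}\}}\bigr]+\Pp\bigl(L_j<j^{\alpha}\bigr)
        \le \exp(-cj^{\alpha})+\Pp\bigl(L_j<j^{\alpha}\bigr).
\]
Here $c$ is the universal constant of Lemma~\ref{lem:dominate}, and Proposition~\ref{prop:prob} holds with this $c$ for every $p$. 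The first term is summable in $j$ uniformly in $p\in[a,1]$, so its contribution over $j\ge 2^M$ tends to $0$.

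A direct sum of $\Pp(L_j<j^\alpha)\le Cj^{-c}$ over $j\ge2^M$ may diverge, because the exponent $c$ from Lemma~\ref{lem:L-lower} need not exceed $1$. We avoid this with the dyadic device from the proof of Theorem~\ref{thm:crp-main}. Since $L_j$ is nondecreasing in $j$, on the event $\bigcap_{k\ge M}\{L_{2^k}\ge 2^{\alpha' k}\}$, with $\alpha'$ slightly larger than $\alpha$, we get $L_j\ge L_{2^{q(j)}}\ge 2^{-\alpha'}j^{\alpha'}\ge j^{\alpha}$ for large $j$. We therefore use the bound
\[
        \Pp\bigl(\exists j\ge2^M:D_j^c\bigr)\le\sum_{j\ge2^M}\exp(-c' j^{\alpha})+\sum_{k\ge M}\Pp\bigl(L_{2^k}<2^{\alpha' k}\bigr),
\]
where the first sum comes from Proposition~\ref{prop:prob} applied on $\{L_j\ge j^\alpha\}$ and then a union bound. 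Both sums are tails of convergent series, provided the constants in $\Pp(L_{2^k}<2^{\alpha'k})\le C2^{-ck}$ can be taken uniform in $p\in[a,1]$. For this we take $\alpha'=a\cdot 3/4$, which is at most $(1-1/4)p$ for all $p\ge a$.

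The main obstacle is this uniformity. I would revisit the chain Lemma~\ref{lem:yule-ld} $\to$ Lemma~\ref{lem:root-ld} $\to$ Lemma~\ref{lem:L-lower} and check each constant:
\begin{enumerate}
\item The geometric law \eqref{eq:yule-geometric} gives $\Pp(Z^{(q)}(t)<\ee^{(1-\eps)qt})\le \ee^{-\eps qt}$ explicitly, so the bound is uniform for $q\ge a$.
\item The $\tau_n$ bounds involve only $Z^{(1)}$ and do not depend on $p$.
\item In Lemma~\ref{lem:L-lower}, the Chernoff step is degenerate as $p\to1$, since $1-p\to0$. I would replace it by a cruder $p$-free bound: trivially $T_j\ge j$, so take $m_j:=j$ and use $L_j\ge N_1^{(j)}$. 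This removes the $(1-p)$-dependence entirely.
\item The comparison $j^{(1-\delta)p}\ge j^{\alpha'}$ then holds uniformly for $p\ge a$ once $\alpha'\le(1-\delta)a$.
\end{enumerate}
With these choices every constant depends only on $a$. Hence the supremum over $p\in[a,1]$ of the right-hand side is the tail of a fixed convergent series, which tends to $0$ as $M\to\infty$.

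At $p=1$ everything is trivial, since no table beyond the first is ever opened, and it is covered by the same bounds. I would check separately that the case $p=1$ causes no division issues in the Yule coupling.
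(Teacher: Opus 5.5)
Your argument is correct, and its skeleton is the paper's: a dyadic good event on which $L_j\ge L_{2^{q(j)}}$ is polynomially large, Proposition~\ref{prop:prob} applied on that $\mathcal F_{T_j}$-measurable event, and summable tails. The paper groups $j$ into blocks $I_k=\{2^k,\ldots,2^{k+1}-1\}$ and pays $\Pp(L_{2^k}<2^{\alpha k})+2^k\exp(-c2^{\alpha k})$ per block. That is the same bookkeeping as your global event $\bigcap_{k\ge M}\{L_{2^k}\ge2^{\alpha'k}\}$.

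The genuine difference is how you obtain uniformity in $p$. The paper tracks no constants. In the continuous-time coupling of Section~\ref{sec:regularity}, the first table is the same rate-one Yule process for every $p$, and the $j$-th table is born no later when $p$ is smaller. Hence $L_j$ is stochastically increasing in $p$, and Lemma~\ref{lem:L-lower} is invoked only once, at $p=a$.

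You instead make the chain of lemmas explicit, and it does go through:
\begin{itemize}
\item the geometric law gives $\Pp(Z^{(q)}(t)<\ee^{(1-\eps)qt})\le\ee^{-\eps qt}$;
\item the $\tau_n$ bound does not depend on $p$;
\item the deterministic bound $T_j\ge j$, together with monotonicity of $N_1^{(n)}$, gives $L_j\ge N_1^{(j)}$, so Lemma~\ref{lem:L-lower} reduces directly to Lemma~\ref{lem:root-ld}.
\end{itemize}

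The paper's route is shorter and more conceptual, but it rests on a monotonicity property of the coupling that it states without detail. Yours stays inside Section~\ref{sec:persistence-proof} and shows that the Chernoff step is unnecessary. Your worry that this step degenerates as $p\to1$ is overstated: $\{T_j<m_j\}$ only requires roughly twice the expected number of table openings, so $c_1$ can be taken uniform. Still, $m_j=j$ is cleaner.
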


\begin{proof}
The case $a=1$ is trivial, so assume $a<1$ and choose $\alpha=a/2$.  By the monotone coupling in $p$ and Lemma~\ref{lem:L-lower} applied at $p=a$, there are constants $C,\eta>0$ such that
\begin{equation}\label{eq:regularity-uniform-L-lower}
        \sup_{p\in[a,1)}\Pp\bigl(L_{2^k}<2^{\alpha k}\bigr)\le C2^{-\eta k}.
\end{equation}
With $I_k:=\{2^k,\ldots,2^{k+1}-1\}$, Proposition~\ref{prop:prob} and
\eqref{eq:regularity-uniform-L-lower} give
\[
\begin{aligned}
        \sup_{p\in[a,1]}\Pp\bigl(\exists j\in I_k:D_j^c\bigr)
        &\le \sup_{p\in[a,1)}
        \biggl\{\Pp\bigl(L_{2^k}<2^{\alpha k}\bigr)
        +\sum_{j\in I_k}
        \E_p\bigl[
        \1_{\{L_{2^k}\ge2^{\alpha k}\}}
        \Pp(D_j^c\mid\mathcal F_{T_j})
        \bigr]\biggr\}  \\
        &\le C2^{-\eta k}+2^k\exp(-c2^{\alpha k}),
\end{aligned}
\]
where $\{L_{2^k}\ge2^{\alpha k}\}\in\mathcal F_{T_j}$ and, on this event,
$L_j\ge L_{2^k}\ge2^{\alpha k}$ for $j\in I_k$.  Summing over $k\ge M$ proves the claim.
\end{proof}

\subsection{Proof of Theorem 1.2}
\begin{proposition}\label{prop:regularity-monotonicity}
The functions $\theta$ and $\rho$ are strictly increasing on $(0,1]$.
\end{proposition}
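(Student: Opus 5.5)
We prove monotonicity with the continuous-time coupling of the previous subsection. Fix $0<p_1<p_2\le1$, so $\lambda_1>\lambda_2$ and $\delta=\lambda_1-\lambda_2>0$. In the coupled system the $\lambda_1$-process is the $\lambda_2$-process together with the extra tables and their descendants. The first table is the same in both processes. Every retained table evolves identically in both, because its same-table births and its $\lambda_2$ new-table births are shared. The tables of the $\lambda_1$-process are therefore the retained tables plus the extra tables.

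The first step is weak monotonicity. The paper's events are defined at discrete arrival times, but table sizes change only at birth times, so we can pass to continuous time. The event ``table $1$ is largest at every discrete time'' equals ``table $1$ is largest at every continuous time $t\ge0$'' for the corresponding process, since ties are allowed. Likewise, $\theta(p)$ can be written through the embedded process as $\lim_{t\to\infty}\Pp(\text{table }1\text{ largest at time }t)$: the total population tends to infinity a.s., and the limit exists by Remark~\ref{rem:limit-pn}, so sampling at total population $n$ or at time $t$ gives the same limit. Dominated convergence or a subsequence argument handles this; alternatively we can use $\theta=\Pp(X_1>\sup_{j\ge2}X_j)$. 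Winning in the $\lambda_1$-process means beating every retained table and every extra table, and this implies winning in the $\lambda_2$-process. Hence $\rho(p_1)\le\rho(p_2)$, and likewise $\theta(p_1)\le\theta(p_2)$.

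The second step is strictness: we must show with positive probability that table $1$ wins in the $\lambda_2$-process but some extra table overtakes it in the $\lambda_1$-process. Here is the plan for $\rho$.
\begin{enumerate}
\item Let $E$ be the event that, before any other event occurs, the first customer produces one extra-table birth (rate $\delta$), and that the extra table then gets two same-table births while table $1$ gets none and nothing else happens. This has positive probability, and on $E$ the extra table strictly exceeds table $1$ at some time, so the $\lambda_1$-process loses.
\item Given $E$, the $\lambda_2$-process is still in its initial state of a single customer, because extra births do not affect it. By the strong Markov property and independence of the Poisson clocks, its future after $E$ is a fresh $\lambda_2$-process, which wins forever with probability $\rho(p_2)>0$ by Theorem~\ref{thm:persistence}.
\item Therefore $\rho(p_2)-\rho(p_1)\ge\Pp(E)\,\rho(p_2)>0$.
\end{enumerate}

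For $\theta$ the same idea needs an event on which the extra table is eventually larger in the limit. We condition on $E$ and additionally require the extra table to stay ahead of table $1$ forever. By Lemma~\ref{lem:dominate} applied with the roles swapped, once the extra table has size $m$ and table $1$ has size $1$, the extra table stays ahead forever with probability at least $1-\ee^{-cm}$. This comparison uses Theorem~\ref{thm:aggregation} on the pair $\{1,\text{extra}\}$, and it stays independent of the $\lambda_2$-process's later ranking given $\mathcal G$, as in the proof of Proposition~\ref{prop:posicor}. Because ties in the limit have probability zero, the extra table staying ahead forever means it is strictly ahead in the limit, so the $\lambda_1$-process loses in the limit. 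It remains to show that $\theta(p_2)$ survives under this conditioning. The main obstacle is this decoupling: the extra table's evolution and the $\lambda_2$-process's limit are coupled through table $1$'s growth. We would use Proposition~\ref{thm:correlation} in the reversed monotone direction: the extra-table-wins event is decreasing in $\xi$ and the $\lambda_2$-win event is increasing, so FKG gives only an upper bound on the joint probability. Instead, we would choose $m$ large, use that Lemma~\ref{lem:dominate} makes the overtaking nearly certain, and bound
\[
\Pp(\text{$\lambda_2$ wins},\ \text{extra stays ahead}\mid E)\ \ge\ \theta(p_2)-\ee^{-cm}>0
\]
for $m$ large.
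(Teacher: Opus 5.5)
Your proof is correct, but your strictness argument takes a different route from the paper's.

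\textbf{The paper's route.} The paper conditions on the whole $\lambda_2$-process and uses the first-table limit $W_1=\lim_t \ee^{-t}R(t)$. Extra tables spawned from table~$1$ during $[0,1]$ form a Poisson process of intensity $\delta R(s)\,\dd s$, and their limiting weights $\ee^{-s}E_s$ are conditionally independent with unbounded support. A single event $H$ (some extra weight exceeds $W_1$) then has conditional probability $q>0$ a.s. This one event destroys both persistence and limiting leadership in the $\lambda_1$-process. It gives $\Pp(A_2\cap H)=\E[q\1_{A_2}]>0$ and $\Pp(B_2\cap H)=\E[q\1_{B_2}]>0$ with no quantitative input, at the cost of using the Yule martingale limit and exponential limiting weights.

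\textbf{Your route.} You use a head-start event at the very beginning, on which the $\lambda_2$-process has not yet moved. You then restart it by the strong Markov property at the stopping time completing $E$, which gives the product bound $\Pp(A_2\cap E)=\Pp(E)\rho(p_2)$. For $\rho$ this is fully elementary, since the $\lambda_1$-failure is witnessed at a finite time. It even gives an explicit increment bound: one same-table birth at the extra table already suffices, since it breaks the initial tie. Then $\Pp(E)=\delta p_1^2/2$, so $\rho(p_2)-\rho(p_1)\ge \frac{p_1(p_2-p_1)}{2p_2}\rho(p_2)$.

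The cost appears for $\theta$. A finite-time lead does not certify a limiting loss. Moreover, ``$\lambda_2$ wins in the limit'' and ``the extra table stays ahead'' are negatively dependent through table~$1$'s growth. So you need the following ingredients:
\begin{itemize}
\item a lead of size $m$, that is, $E$ replaced by $E_m$ with $m-1$ same-table births at the extra table;
\item the swapped Lemma~\ref{lem:dominate} for the P\'olya urn started from $(1,m)$, giving $\Pp(S^c\mid E_m)\le\ee^{-cm}$;
\item $\Pp(W\mid E_m)=\theta(p_2)$, which is the same strong Markov restart as for $\rho$, because on $E_m$ the embedded discrete chain of the $\lambda_2$-process is that of the restarted one;
\item the union bound $\Pp(W\cap S\mid E_m)\ge\Pp(W\mid E_m)-\Pp(S^c\mid E_m)\ge\theta(p_2)-\ee^{-cm}$, which is positive for large $m$.
\end{itemize}
The paper sidesteps this step because $W_1$ is measurable with respect to the $\lambda_2$-process.

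\textbf{One correction.} Your intermediate claim that the pair comparison ``stays independent of the $\lambda_2$-process's later ranking given $\mathcal G$'' is false: that ranking depends on table~$1$'s share $\xi$. Your final union bound does not use this claim, so drop that sentence.
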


\begin{proof}
\emph{Monotonicity.}  Fix $0<p_1<p_2\le1$ and use the coupling above.  Let $A_i$ be the event that the first table is never overtaken, and let $B_i$ be the event that the first table has the largest limiting weight in the $\lambda_i$-process.  Then
\[
        A_1\subseteq A_2,
        \qquad
        B_1\subseteq B_2.
\]
By definition, $\Pp(A_i)=\rho(p_i)$, while Remark~\ref{rem:limit-pn} gives $\Pp(B_i)=\theta(p_i)$.
Thus $\rho(p_1)\le\rho(p_2)$ and $\theta(p_1)\le\theta(p_2)$.

\emph{Strictness.}  Condition on the $\lambda_2$-process, and let $R(t)$ be the size of its first table.  Then
\[
        W_1:=\lim_{t\to\infty}\ee^{-t}R(t)
\]
exists and belongs to $(0,\infty)$ a.s. by the standard martingale convergence
theorem for Yule processes; see, for example,
Durrett~\cite[Theorem~4.2]{durrett2016essentials}.  In this $\lambda$-parametrization, each table grows after its birth as a rate-one Yule process.  Extra tables born from first-table members during $[0,1]$ form a Poisson process with intensity $\delta R(s)\,\dd s$.  If such a table is born at time $s$, its limiting weight relative to the common factor $\ee^t$ is $\ee^{-s}E_s$, with $E_s\sim{\rm Exp}(1)$ independently.  Hence the event
\[
        H:=\{\exists s\in[0,1]\text{ an extra table with }\ee^{-s}E_s>W_1\}
\]
has conditional probability
\[
        q:=\Pp(H\mid\lambda_2\text{-process})
        =
        1-\exp\biggl\{
        -\delta\int_0^1 R(s)\exp(-\ee^sW_1)\,\dd s
        \biggr\}>0
        \quad\text{a.s.}
\]
Since $\Pp(A_2)=\rho(p_2)>0$,
\[
        \Pp(A_2\cap H)
        =\E\bigl[q\1_{A_2}\bigr]>0.
\]
On $A_2\cap H$, persistence holds for $\lambda_2$ but fails for $\lambda_1$.  Hence
\[
        \rho(p_2)-\rho(p_1)
        \ge \Pp(A_2\cap H)>0.
\]
Similarly, since $\Pp(B_2)=\theta(p_2)>0$,
\[
        \theta(p_2)-\theta(p_1)
        \ge \Pp(B_2\cap H)
        =\E\bigl[q\1_{B_2}\bigr]>0.\qedhere
\]
\end{proof}

To prove continuity, we first record a finite-table approximation.

\begin{lemma}\label{lem:finite-approximants-continuous}
For each fixed $m\ge2$, define, for $0<p<1$,
\[
        \rho_m(p):=\Pp\Bigl(\bigcap_{j=2}^m D_j\Bigr),
        \qquad
        \theta_m(p):=\Pp\bigl(X_1>\max_{2\le j\le m}X_j\bigr),
\]
where $X=(X_j:j\ge1)$ is the labelled scaling limit from
Remark~\ref{rem:limit-pn}.  Set $\rho_m(1)=\theta_m(1)=1$.  Then
$\rho_m$ and $\theta_m$ are continuous on $(0,1]$.
\end{lemma}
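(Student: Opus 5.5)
The plan is to condition on $\mathcal F_{T_m}$ and use Theorem~\ref{thm:aggregation} to write both $\rho_m(p)$ and $\theta_m(p)$ as a finite-path sum up to time $T_m$ plus a $p$-independent functional of the configuration at $T_m$.

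\emph{Reduction for $\rho_m$.} Apply Theorem~\ref{thm:aggregation} at the stopping time $T_m$ with the partition whose only non-singleton block is $[m]=\{1,\ldots,m\}$. Conditionally on $\mathcal F_{T_m}$, the within-block sequence $C^{(T_m)}_{[m]}$ is the fixed-table restaurant on $[m]$, an $m$-colour P\'olya urn started from $\mathbf n=(N^{(T_m)}_1,\ldots,N^{(T_m)}_m)$. Its law does not depend on $p$. The event $\bigcap_{j=2}^m D_j$ splits into two parts:
\begin{itemize}
\item the event $\Gamma_m\in\mathcal F_{T_m}$ that table $1$ was never overtaken up to time $T_m$;
\item the event that table $1$ is never overtaken in the urn, which depends only on $C^{(T_m)}_{[m]}$ because the comparisons among tables $1,\ldots,m$ change only at visits to $[m]$.
\end{itemize}
Hence
\[
\rho_m(p)=\E_p\bigl[\1_{\Gamma_m}\,\varphi_m(\mathbf n)\bigr],
\]
where $\varphi_m(\mathbf n)$ is the urn probability of no overtaking, a function independent of $p$ and bounded by $1$. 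We expand this expectation as a sum over finite seating paths $(c_1,\ldots,c_t)$ with $T_m=t$. Each path has probability equal to a polynomial in $p$, by \eqref{eq:crp-transition}.

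\emph{Continuity on $(0,1)$.} Each term is continuous in $p$, so it suffices to bound the tail uniformly. Note that $T_m-1$ is the waiting time for $m-1$ successes in i.i.d.\ Bernoulli$(1-p)$ trials. Hence $\sup_{p\in[a,b]}\Pp_p(T_m>t)\to0$ as $t\to\infty$ for $0<a<b<1$, which gives uniform convergence of the truncated sums and thus continuity on $(0,1)$.

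\emph{Reduction for $\theta_m$.} Apply the same decomposition. By part~(2) of Theorem~\ref{thm:aggregation} and the P\'olya urn representation, conditionally on $\mathcal F_{T_m}$ the vector $(X_1,\ldots,X_m)$ equals $S\cdot(\Delta_1,\ldots,\Delta_m)$. Here $S$ is the scaling limit of the block size, which satisfies $S>0$ a.s. since $S\ge X_1>0$. The vector $\Delta\sim\mathrm{Dirichlet}(\mathbf n)$ is the limiting internal proportion vector and is independent of the block-level dynamics, hence of $S$. Since the comparison $X_1>\max_{2\le j\le m}X_j$ is scale invariant,
\[
\theta_m(p)=\E_p[\psi_m(\mathbf n)],\qquad \psi_m(\mathbf n):=\Pp\bigl(\Delta_1>\max_{j\ge2}\Delta_j\bigr).
\]
No event $\Gamma_m$ appears here. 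The same finite-path argument then gives continuity on $(0,1)$.

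\emph{Continuity at $p=1$.} This is the main point needing care, since $T_m$ blows up as $p\uparrow1$ and the path expansion is not uniform there; we need $\rho_m(p),\theta_m(p)\to1$.
\begin{itemize}
\item For $\rho_m$, use Proposition~\ref{prop:prob} and the bound $L_j\ge L_2=T_2-1$, where $T_2-1$ is geometric with success parameter $1-p$. This gives
\[
1-\rho_m(p)\le\sum_{j=2}^m\E_p\bigl[\ee^{-cL_j}\bigr]\le (m-1)\,\E_p\bigl[\ee^{-c(T_2-1)}\bigr]\longrightarrow0 .
\]
\item For $\theta_m$, compare each $\Delta_j$ with $\Delta_1$. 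The pair $(\Delta_1,\Delta_j)$, renormalised, is $\mathrm{Beta}(N_1^{(T_m)},N_j^{(T_m)})$. Apply the de Finetti bound of Lemma~\ref{lem:dominate} to the pair $\{1,j\}$ started at time $T_j$ with sizes $(L_j,1)$: the limiting frequency of table $1$ there is $\mathrm{Beta}(L_j,1)$, which exceeds $1/2$ except with probability $2^{-L_j}$. This gives $1-\theta_m(p)\le(m-1)\E_p[2^{-L_2}]\to0$.
\end{itemize}
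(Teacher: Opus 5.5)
Your proof is correct and follows the paper's route. The ingredients match: a sum over seating histories up to $T_m$ with weights polynomial in $p$; a $p$-independent conditional probability after $T_m$ from Theorem~\ref{thm:aggregation}; the uniform geometric tail of $T_m$ on compact subintervals of $(0,1)$; and, at $p=1$, Proposition~\ref{prop:prob} with $L_j\ge L_2$ for $\rho_m$ (your $\E_p[\ee^{-c(T_2-1)}]$ is the paper's event of probability $p^{R-1}$ in integrated form).

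There are two minor differences, and both your variants are valid. First, you spell out the Dirichlet scale-invariance that justifies the paper's brief claim that $\{X_1>\max_{2\le j\le m}X_j\}$ is determined by the relative seating process. Second, for $\theta_m$ at $p=1$ you bound $1-\theta_m(p)\le\sum_{j=2}^m\E_p[2^{-L_j}]$ directly via the $\mathrm{Beta}(L_j,1)$ limiting frequency, whereas the paper uses the shortcut $\rho_m\le\theta_m$, which relies on the absence of limiting ties.
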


\begin{proof}
First fix a compact interval $K=[a,b]\subset(0,1)$.  The random time $T_m$
has an exponentially decaying tail uniformly for $p\in K$: the successive
new-table openings occur with probability $1-p\ge1-b$, so $T_m$ is dominated
by the sum of $m-1$ geometric random variables with parameter $1-b$.

On each event $\{T_m=r\}$ there are only finitely many seating histories
$h=(c_1,\ldots,c_r)$.  Each such history has probability polynomial in $p$,
because every step contributes either a factor $1-p$ for a new table or a
factor $p$ times a deterministic occupancy ratio for an old table choice.
Given $h$, discard all subsequent customers seated outside the first $m$
tables.  By Theorem~\ref{thm:aggregation}, the relative seating process among
tables $1,\ldots,m$ is a fixed $m$-table restaurant whose law depends only on
the occupancies at time $T_m$, not on $p$.  Both events $\cap_{j=2}^mD_j$ and
$\{X_1>\max_{2\le j\le m}X_j\}$ are determined by this relative process.
Thus, conditional on $h$, their probabilities are constants independent of
$p$.

If we truncate to histories with $T_m\le R$, the resulting approximants are
finite sums of polynomials in $p$, hence continuous.  The error is at most
$\Pp(T_m>R)$ for both $\rho_m$ and $\theta_m$, and this tends to zero
uniformly on $K$.  Therefore $\rho_m$ and $\theta_m$ are continuous on
$(0,1)$.

It remains to check continuity at $p=1$.  Fix $R\ge1$.  If the first $R$
customers all sit at table $1$, then $L_j\ge R$ for every fixed $j\ge2$, and
this event has probability $p^{R-1}$.  By Proposition~\ref{prop:prob},
\[
        \Pp(D_j^c)\le 1-p^{R-1}+\ee^{-cR},
        \qquad j\ge2.
\]
Hence
\[
        1-\rho_m(p)
        \le \sum_{j=2}^m\Pp(D_j^c)
        \le (m-1)\bigl(1-p^{R-1}+\ee^{-cR}\bigr).
\]
Letting first $p\uparrow1$ and then $R\to\infty$ gives $\rho_m(p)\to1$.
The finite analogue of $\rho\le\theta$ gives $\rho_m(p)\le\theta_m(p)\le1$,
so also $\theta_m(p)\to1$.
\end{proof}

\begin{proposition}
The functions $\theta$ and $\rho$ are continuous on $(0,1]$.
\end{proposition}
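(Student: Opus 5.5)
The plan is to obtain continuity of $\rho$ and $\theta$ on $(0,1]$ as uniform limits of the continuous finite-table approximants $\rho_m,\theta_m$ from Lemma~\ref{lem:finite-approximants-continuous}. It suffices to prove continuity on $[a,1]$ for each fixed $a\in(0,1)$. We show
\[
        \sup_{p\in[a,1]}\bigl(\rho_m(p)-\rho(p)\bigr)\to0,
        \qquad
        \sup_{p\in[a,1]}\bigl(\theta_m(p)-\theta(p)\bigr)\to0,
        \qquad m\to\infty.
\]
A uniform limit of continuous functions is continuous, so this gives the claim.

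For $\rho$ the comparison is direct. Since $\bigcap_{j=2}^\infty D_j\subseteq\bigcap_{j=2}^m D_j$, we have
\[
        0\le\rho_m(p)-\rho(p)\le\Pp\bigl(\exists j>m: D_j^c\bigr).
\]
Take $m\ge 2^M$. The right-hand side is then at most $\Pp(\exists j\ge2^M:D_j^c)$. By the uniform tail estimate \eqref{eq:regularity-tail-unconditioned}, this bound tends to zero uniformly in $p\in[a,1]$. At $p=1$ everything equals $1$, so that endpoint is consistent.

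For $\theta$ the same structure holds, since
\[
        0\le\theta_m(p)-\theta(p)=\Pp\Bigl(X_1>\max_{2\le j\le m}X_j,\ X_1\le\sup_{j>m}X_j\Bigr)\le\Pp\bigl(\exists j>m:X_j\ge X_1\bigr).
\]
We bound this by the tail of non-persistence. If $X_j>X_1$ for some $j$, then $N_j^{(n)}>N_1^{(n)}$ for all large $n$ by the a.s.\ convergence in Remark~\ref{rem:limit-pn}, so $D_j^c$ occurs. Ties $X_j=X_1$ have probability zero. Hence
\[
        \Pp\bigl(\exists j>m:X_j\ge X_1\bigr)\le\Pp\bigl(\exists j>m:D_j^c\bigr),
\]
and the same uniform estimate \eqref{eq:regularity-tail-unconditioned} applies. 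Continuity at $p=1$ then follows because the uniformity holds on $[a,1]$ and $\theta_m(1)=\theta(1)=1$.

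The main point requiring care is the a.s.\ identification in the $\theta$ step: the event $\{X_j>X_1\}$ must force eventual overtaking, and hence $D_j^c$, in the same probability space. This uses the a.s.\ $\ell^q$ convergence from Remark~\ref{rem:limit-pn} (coordinatewise convergence suffices) together with the absence of ties. Everything else is a direct application of the earlier lemmas.
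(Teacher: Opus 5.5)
Your proposal is correct and follows the paper's proof. Like the paper, you approximate $\rho$ and $\theta$ uniformly on $[a,1]$ by the continuous finite-table approximants $\rho_m,\theta_m$, and you control the error by $\Pp(\exists j>m:D_j^c)$ together with the uniform tail estimate \eqref{eq:regularity-tail-unconditioned}. Your derivation of $\theta_m-\theta\le\Pp(\exists j>m:D_j^c)$, via a.s.\ coordinatewise convergence and the absence of ties, just spells out the paper's one-line remark that a later limiting winner must eventually overtake the first table.
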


\begin{proof}
Fix $a\in(0,1]$ and put $K=[a,1]$.  Let $\rho_m,\theta_m$ be the finite
approximants from Lemma~\ref{lem:finite-approximants-continuous}.  By the
lemma, both are continuous on $K$.

Note that, for $p\in K$,
\[
        0\le \rho_m(p)-\rho(p)
        \le \Pp\bigl(\exists j>m:D_j^c\bigr),
        \qquad
        0\le \theta_m(p)-\theta(p)
        \le \Pp\bigl(\exists j>m:D_j^c\bigr),
\]
since a later limiting winner must eventually overtake the first table.  By~\eqref{eq:regularity-tail-unconditioned},
\[
        \sup_{p\in K}\Pp\bigl(\exists j>m:D_j^c\bigr)\longrightarrow0.
\]
Thus $\rho_m\to\rho$ and $\theta_m\to\theta$ uniformly on $K$, which proves the
claim since $a$ is arbitrary.
\end{proof}

\begin{proposition}\label{prop:regularity-zero-limit}
As $p\downarrow0$,
\[
        \rho(p)\to0,
        \qquad
        \theta(p)\to0.
\]
\end{proposition}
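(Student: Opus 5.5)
Since $\rho(p)\le\theta(p)$ by Theorem~\ref{thm:persistence}, it suffices to show $\theta(p)\to0$ as $p\downarrow0$. The plan is to show that, as $p\downarrow0$, the root table is with high probability beaten in the limit by at least one early table. We work in the continuous-time $\lambda$-process with $\lambda=(1-p)/p\to\infty$. In this parametrization each table, after its birth at time $s$, grows as a rate-one Yule process, so its limiting weight relative to $\ee^{t}$ is $\ee^{-s}E$ with $E\sim\mathrm{Exp}(1)$. The weights of distinct tables are independent given their birth times. The root table has weight $W_1=E_1\sim\mathrm{Exp}(1)$. By the argument of Remark~\ref{rem:limit-pn}, adapted to the continuous-time scaling (the common normalization cancels in the comparison), $\theta(p)=\Pp(W_1>\sup_{j\ge2}W_j)$.

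\textbf{Step 1.} Restrict attention to the tables born directly from root customers during $[0,s_0]$, where $s_0$ is a small fixed time. Given the root process $R(\cdot)$, these tables are born as a Poisson process with intensity $\lambda R(s)\,\dd s$. Their weights $\ee^{-s}E_s$ are independent exponentials scaled by $\ee^{-s}\ge\ee^{-s_0}$, independent of $W_1$ except through $R$. Conditionally on $R$ and $W_1$, exactly as in the strictness proof of Proposition~\ref{prop:regularity-monotonicity}, the probability that none of them beats $W_1$ is
\[
\exp\Bigl\{-\lambda\int_0^{s_0}R(s)\exp(-\ee^{s}W_1)\,\dd s\Bigr\}
\le \exp\bigl\{-\lambda s_0\exp(-\ee^{s_0}W_1)\bigr\},
\]
using $R(s)\ge1$.

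\textbf{Step 2.} Take expectations. Split on $\{W_1\le M\}$: on this event the bound is at most $\exp\{-\lambda s_0\ee^{-\ee^{s_0}M}\}$, which tends to $0$ as $\lambda\to\infty$ for fixed $M$ and $s_0$. The complementary event has probability at most $\Pp(W_1>M)$, which needs care because $W_1$ is not independent of $R$. I would instead use the coupling $N_1=Z^{(p)}$ in original time, or equivalently note that in the $\lambda$-process the root table alone is a rate-one Yule process, so $W_1\sim\mathrm{Exp}(1)$ exactly and $\Pp(W_1>M)=\ee^{-M}$. Hence $\limsup_{p\downarrow0}\theta(p)\le\ee^{-M}$ for every $M$, which gives $\theta(p)\to0$.

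\textbf{Main obstacle.} The delicate point is justifying the identification of $\theta(p)$ with the continuous-time comparison of limiting weights. This amounts to checking that the a.s.\ limits of the discrete cluster sizes, normalized by $n^{-p}$, correspond up to a common random factor to the martingale limits $\ee^{-t}\times$size in the $\lambda$-process time scale. That correspondence holds because $\tau_n$ in this parametrization satisfies $\ee^{-\tau_n(1+\lambda)}n\to$ a positive random constant shared by all tables. If this is awkward, a cruder alternative is to note that $\{W_1>\sup_j W_j\}$ fails whenever some extra table's limiting weight exceeds $W_1$, and to bound directly $\Pp(1\in\Pi_1^{(n),\downarrow})\le\Pp(\text{no early table ends ahead at time }n)$ with a finite-$n$ Yule estimate. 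That approach requires care only in passing to the limit, which uses the no-ties fact from Remark~\ref{rem:limit-pn}.
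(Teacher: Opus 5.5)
Your argument is correct, but it takes a different route from the paper.

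\textbf{The paper's route.} The paper stays in discrete time and uses symmetry. Let $B_M$ be the event that the first $M$ customers open $M$ distinct tables; it has probability $(1-p)^{M-1}\to1$. On $B_M$ the tables $1,\dots,M$ all have size one and are exchangeable afterwards. By the no-ties property, the root therefore beats tables $2,\dots,M$ in the limit with conditional probability $1/M$. This gives $\theta(p)\le1-(1-p)^{M-1}+(1-p)^{M-1}/M$, and one lets $p\downarrow0$ and then $M\to\infty$.

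\textbf{Your route.} You reuse the Poisson-challenger computation from the strictness part of Proposition~\ref{prop:regularity-monotonicity}. The challengers are now the root-born tables of the $\lambda$-process itself. This is legitimate, because the new-table clocks of root customers are independent of the same-table clocks that determine $R$.

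Some remarks on the details:
\begin{itemize}
\item You only need the marginal law $W_1\sim\mathrm{Exp}(1)$. The ``care'' you worry about in Step~2 is unnecessary, since your Step~1 bound is already a function of $W_1$ alone.
\item The real cost is the identification $X_j=W^{-p}W_j$, with $W=\lim_{t\to\infty}\ee^{-(1+\lambda)t}Z(t)$ for the total population $Z$. Your sketch of this is right: it holds simultaneously for all labels a.s. The paper tacitly uses the same identification in its monotonicity proof.
\item You only need the inclusion of $\{X_1>\sup_{j\ge2}X_j\}$ in the event that no root-born table in $[0,s_0]$ has $W_j>W_1$. Neither the no-ties fact nor your ``cruder alternative'' is required.
\end{itemize}

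\textbf{What each approach buys.} Your bound is quantitative and gives a better rate. Evaluating $\E\exp\{-\lambda s_0\ee^{-\ee^{s_0}W_1}\}$ explicitly gives $\theta(p)=O(p^{\ee^{-s_0}})$, hence $O(p^{1-\eps})$ for every $\eps>0$ by taking $s_0$ small. Optimizing $M\asymp p^{-1/2}$ in the paper's bound yields only $O(\sqrt p)$. The paper's argument, in turn, is shorter and entirely elementary, with no continuous-time embedding.
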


\begin{proof}
Let $B_M$ be the event that the first $M$ customers occupy $M$ distinct tables.  Then $\Pp(B_M)=(1-p)^{M-1}$.  On $B_M$, the first $M$ tables are exchangeable thereafter, and limiting ties among them have probability zero; hence
\[
        \theta(p)
        \le 1-(1-p)^{M-1}+\frac{(1-p)^{M-1}}{M}.
\]
Letting first $p\downarrow0$ and then $M\to\infty$ gives $\theta(p)\to0$.
Since $0\le\rho(p)\le\theta(p)$, also $\rho(p)\to0$.
\end{proof}

\begin{acks}
The author thanks Chenlin Gu and Xingjian Hu for very helpful discussions.
\end{acks}

\bibliographystyle{amsplain}
\bibliography{mybib}

\end{document}